\newtheoremstyle{theorem}{5pt}{5pt}{\itshape}{}{\bfseries}{.}{.5em}{}
\newtheorem{theorem}{Theorem}
\newtheorem{lemma}[theorem]{Lemma}
\newtheorem{proposition}[theorem]{Proposition}
\newtheorem{remark}{Remark}
\theoremstyle{definition}
\newtheorem*{definition*}{Definition}
\titlespacing*{\section}{0pt}{3.5ex plus 1ex minus .2ex}{2.3ex plus .2ex}
\titlespacing*{\section}{0pt}{3.5ex plus 1ex minus .2ex}{2.3ex plus .2ex}
\begin{document}

\title{Discreteness of Transmission Eigenvalues for  Higher-Order Main Terms and Perturbations}
\author{Andoni Garc\'{\i}a\footnote{Department of Mathematics, University of the Basque Country, 48080, Bilbao, Spain}{ },
Esa V\!. Vesalainen\footnote{Aalto University, Department of Mathematics and Systems Analysis, P.O. Box 11100, FI-00076 Aalto, FINLAND}{ },
Miren Zubeldia\footnote{BCAM - Basque Center for Applied Mathematics, Mazarredo, 14 E48009 Bilbao, Basque Country - Spain and Department of Mathematics and Statistics, University of Helsinki, Finland}}
\date{}
\maketitle

\begin{abstract}
In this paper we extend Sylvester's approach via upper triangular compact operators to establish the discreteness of transmission eigenvalues for higher-order main terms and higher-order perturbations. The coefficients of the perturbations must be sufficiently smooth and the coefficients of the higher-order terms of the perturbation must vanish in a neighbourhood of the boundary of the underlying domain. The zeroeth order term must satisfy a suitable coercivity condition in a neighbourhood of the boundary.
\end{abstract}

\section{Introduction}

Let $P$ be a formally self-adjoint elliptic constant coefficient partial differential operator in $\mathbb R^n$ bounded from below of order $k\in\mathbb Z_+$, and let $\Omega\subset\mathbb R^n$ be a bounded non-empty open set. We will consider the following interior transmission eigenvalue problem associated to $P$,
\begin{equation*}\label{eq:ITP}
\begin{cases}
(P+Q-\lambda)v=0 & \text{in $\Omega$},\\
(P-\lambda)w=0 & \text{in $\Omega$},\\
v-w\in H^k_0(\Omega).
\end{cases}
\end{equation*}
Here $H^k_0(\Omega)$ denotes the Sobolev space defined as the closure of $C^{\infty}_{\mathrm c}(\Omega)$ in the Sobolev norm $H^k(\Omega)$.
 We say that $\lambda\in\mathbb C$ is a transmission eigenvalue if there exists a non-zero pair of $L^2$-functions $(v,w)$ solving the above system. The multiplicity of a transmission eigenvalue is the dimension of the space of solutions. 
In the equations above, the perturbation $Q$ will be a partial differential operator of order smaller than $k$.

The problem of transmission eigenvalues was introduced in \cite{Colton--Monk, Kirsch} in connection with an inverse scattering problem for the reduced wave equation. The discreteness of the set of transmission eigenvalues was among the first general results obtained \cite{Colton--Kirsch--Paivarinta}. The original motivation for studying them was largely derived from the fact that some qualitative methods of inverse scattering theory, namely the linearization method \cite{Colton--Kirsch} and the factorization method \cite{Kirsch2}, require using energies (or wavenumbers or frequencies) for which every incident wave scatters non-trivially. Energies which do not satisfy this condition turn out to be transmission eigenvalues, and so the discreteness of transmission eigenvalues implies that of non-scattering energies.

For the existence of transmission eigenvalues the first general result was proved in \cite{Paivarinta--Sylvester}, and the existence of infinitely many transmission eigenvalues was established in \cite{Cakoni--Gintides--Haddar} soon after. This gave more impetus to the study of the topic, as transmission eigenvalues provide a new potential avenue for deriving information about a scatterer. In particular, the knowledge of interior transmission eigenvalues can be used in determining a radial scatterer \cite{Maclaughlin--Polyakov, Maclaughlin--Polyakov--Sacks, Colton--Kirsch--Paivarinta}, and for non-radial scatterers, also some information about the scatterer can be derived~\cite{Cakoni--Colton--Monk}.

Most of the work on transmission eigenvalues has so far been for second-order main terms and zeroeth order perturbations.
The series of papers \cite{Hitrik--Krupchyk--Ola--Paivarinta1, Hitrik--Krupchyk--Ola--Paivarinta2, Hitrik--Krupchyk--Ola--Paivarinta3} were the first to consider transmission eigenvalues for higher-order main terms. More concretely, they considered the case of general constant coefficient operators $P$, and for these a well developed scattering theory is available~\cite{Hormander}.

For more information and references on transmission eigenvalues we recommend \cite{Cakoni--Haddar1, Cakoni--Haddar2}.

\subsection{Why higher order main terms and perturbations?}

Our motivation for considering more general higher-order main terms and higher-order perturbations is twofold. First, one would naturally like to strive for as much generality as is reasonably possible. The works \cite{Hitrik--Krupchyk--Ola--Paivarinta1, Hitrik--Krupchyk--Ola--Paivarinta2, Hitrik--Krupchyk--Ola--Paivarinta3} demonstrate how the basic features of the theory of transmission eigenvalues pleasantly carry through to higher-order main terms. Furthermore, as far as we know, higher-order perturbations have not been considered in this connection before.

Second, higher-order operators and perturbations appear in many places in both mathematics and applications. Let us mention only a few examples: fourth order equations and second order perturbations in plate tectonics and more generally thin elastic plates in mechanics \cite{Villaggio}, equations of quantum field theory \cite{Esposito--Kamenshchik}, the Paneitz--Branson operator of conformal geometry \cite{Branson}, and the characterizing equation for boundary values of polyharmonic functions in the unit ball of $\mathbb C^2$ \cite{Bedford, Bedford--Federbush}. Also sets of generators of centers of the algebras of invariant differential operators in many homogeneous spaces, whose joint eigenfunctions are the central objects of harmonic analysis in homogeneous spaces \cite{Helgason}, often include higher-order operators. One notable instance of this are spaces such as $\mathrm{SL}(n,\mathbb R)/SO(n,\mathbb R)$ which are of great importance in number theory \cite{Goldfeld}. Last, but definitely not least, already for second-order main terms first-order perturbations appear when magnetic potentials are present in Schr\"odinger scattering.

\subsection{Higher-order main terms with higher-order perturbations}\label{Esa}

Arguably the most common approach to dealing with transmission eigenvalues would involve considering certain quadratic forms involving the inverse of the perturbation. With such an approach this inverse would pose obvious challenges for higher-order perturbations. The recent novel approach of Sylvester \cite{Sylvester} to establishing discreteness of transmission eigenvalues, which gives the most general discreteness result for $P=-\Delta$ and related divergence form main terms to date, instead turns out to be rather more amenable to such generalizations.

What we consider are perturbations of order lower than $k$ in which the positive order terms have coefficients with enough smoothness and vanish in a neighbourhood of $\partial\Omega$, and in which the zeroeth order term satisfies a suitable coercivity condition. More precisely, in this case $Q= W + \lambda^\nu V$ where $V\in L^\infty(\Omega)$ is the complex-valued zeroeth order term, $\nu\in\left\{0,1\right\}$ and $W$ is a partial differential operator of the form
$$
W = \sum_{1 \leqslant |\alpha| \leqslant e} W_\alpha \partial^\alpha 
$$ 
with $e\in \{0, 1, \ldots, k-1\}$ and the complex-valued coefficients $W_\alpha$ are smooth enough for each multi-index $\alpha$ and vanish in a neighbourhood of $\partial\Omega$. We are planning to relax this vanishing condition for the magnetic Schr\"odinger operator elsewhere.

This approach requires some a priori estimates which we derive in the spirit of \cite{Robbiano} using parameter-dependent pseudodifferential calculus. We emphasize here that no smoothness is required from $\partial\Omega$, and $V$ only needs to satisfy the coercivity condition near $\partial\Omega$; otherwise $V$ can be an arbitrary complex valued $L^\infty$-function.

The two cases $\nu=0$ and $\nu=1$ could be called Schr\"odinger and Helmholtz cases, respectively. Perturbations with $\nu=0$ appear in quantum scattering, whereas potentials with $\nu=1$ appear in electromagnetic and acoustic scattering. Although the proofs are mostly parallel for the two cases, the case $\nu=1$ is harder as in the end one needs to perform a perturbation argument with large $\lambda$ which is harder for the rather large perturbation $\lambda V$.

The case $\nu=1$ is more delicate in other ways as well: In the case where $W$ is present, the Helmholtz argument will involve $W/\lambda$, and this prevents us from excluding the possibility that the transmission eigenvalues accumulate to zero. Furthermore, for $\nu=1$, we need to invoke unique continuation and this imposes restrictions on the main term. Finally, in the case $\nu=1$, we can only treat one of the two coercivity conditions which appear in \cite{Sylvester}. We are planning to consider the other coercivity condition elsewhere.


\subsection*{Notation}

In various exponents, $\varepsilon$ will denote an arbitrarily and sufficiently small positive real number whose value will change from one occurrence to the next. The symbol $\mathbb C^\times$ means the set of non-zero complex numbers.

In the integrals where we do not specify the integration space we mean that we are integrating over the domain $\Omega$ with respect to the Lebesgue measure $dx$, i.e. $\int = \int_{\Omega} dx$. In addition, if we do not specify the domain in function spaces, it means that we are considering the domain $\Omega$. For instance, $L^2 = L^2(\Omega)$. Furthermore, we write  $\Vert \cdot \Vert$ for $\Vert \cdot \Vert_{L^2}$.

We use the standard asymptotic notations $\ll$, $\gg$ and $\asymp$. For complex-valued functions $f$ and $g$, defined in some set $X$, the notation $f\ll g$ means that there exists a constant $C\in\mathbb R_+$ so that $\left|f(x)\right|\leqslant C\left|g(x)\right|$ for all $x\in X$. The implicit constant $C$ is always allowed to depend on the dimension $n\in\mathbb Z_+$ of the ambient Euclidean space, on $\varepsilon$, and on the domain $\Omega$, on the order $k\in\mathbb Z_+$ of the main terms, and on the potentials and coefficients $V$ and $W_\alpha$ appearing in the interior transmission problem, and on anything that has been explicitly fixed. If the implicit constant depends on some other objects $\alpha$, $\beta$, \dots, then we write $\ll_{\alpha,\beta,\dots}$ instead of $\ll$. The notation $g\gg f$ means the same as $f\ll g$. The notation $f\asymp g$ means that both $f\ll g$ and $f\gg g$.

\subsection*{Acknowledgements}

The first author received funding from the project MTM2011-24054 Ministerio de Ciencia y Tecnolog\'ia de Espa\~na.

The second author received funding from Finland's ministry of Education through the Doctoral Program in Inverse Problems, the Vilho, Yrj\"o and Kalle V\"ais\"al\"a Foundation, the Academy of Finland through the Finnish Centre of Excellence in Inverse Problems Research and the projects 283262, 276031 and 282938, and the European Research Council under the European Union's Seventh Framework Programme (FP/2007-2013) / ERC Grant Agreement n.\ 267700.

The third author is supported by the Basque Government through the grant POS-2014-1-43\ and also by the Basque Government through the BERC 2014-2017 program and by the Spanish Ministry of Economy and Competitiveness MINECO: BCAM Severo Ochoa accreditation SEV-2013-0323 and the project MTM2014-53145-P. She also received funding from CoE in Inverse Problems 2012-2017, University of Helsinki and UH/CoE project 79999103.

Finally, the second and third authors would like to acknowledge the generous support of the Henri Poincar\'e Institute, where part of this research was carried out during the thematic program in inverse problems in 2015.

\section{The main results}


Let $P$ be a formally self-adjoint elliptic constant coefficient partial differential operator of order $k\in\mathbb Z_+$ bounded from below, and let $\Omega$ be a bounded nonempty open set in $\mathbb R^n$. The symbol of $P$ is denoted by $P(\cdot)$.

The minimal extension of $P$ is the closed extension
\[P\colon H^k_0(\Omega)\longrightarrow L^2(\Omega),\]
where $H_0^k(\Omega)$ is the closure of $C_{\mathrm c}^\infty(\Omega)$ in the Sobolev norm $\left\|\cdot\right\|_{H^k(\Omega)}$.
The maximal extension of $P$ is the closed extension
\[P\colon H^k_P(\Omega)\longrightarrow L^2(\Omega),\]
where $H^k_P(\Omega)$ is the function space
\[H^k_P(\Omega)=\bigl\{u\in L^2(\Omega)\bigm| Pu\in L^2(\Omega)\bigr\}.\]
Furthermore, for $k\in\mathbb Z_+\cup\left\{0\right\}$ and $p\in\left[1,\infty\right]$, we denote by $W_{\mathrm c}^{k,p}(\Omega)$ those functions in $W^{k,p}(\Omega)$ which are supported in some compact subset of $\Omega$. Or equivalently, the functions in $W_{\mathrm c}^{k,p}(\Omega)$ are functions in $W^{k,p}(\Omega)$ each of which vanishes in some neighbourhood of $\partial\Omega$.

We recall here that the adjoint of the maximal extension of $P$ is the minimal extension of $P$, and vice versa. For more on minimal and maximal realizations we refer to Section 4.1 of \cite{Grubb}.

\medbreak
The following is the main theorem in the Schr\"odinger case.

\begin{theorem}\label{schrodinger-main-theorem}
Let $P$ be a formally self-adjoint elliptic constant coefficient partial differential operator of order $k\in\mathbb Z_+$ in $\mathbb R^n$ bounded from below, let $\Omega$ be a bounded nonempty subset of $\mathbb R^n$.
Let $V\in L^\infty(\Omega)$ be such that in some neighbourhood of $\partial\Omega$ it only takes values from some closed complex half-plane not containing zero. Furthermore, let $W_\alpha\in W^{\left|\alpha\right|,\infty}_{\mathrm c}(\Omega)$ be complex-valued for each multi-index $\alpha$ with $1\leqslant\left|\alpha\right|\leqslant e$, where $e\in\left\{0,1,\ldots,k-1\right\}$, and write $W$ for the partial differential operator
\[W=\sum_{1\leqslant\left|\alpha\right|\leqslant e}W_\alpha\partial^\alpha.\]
In the case $e=0$ we set $W=0$.
Let $\Lambda$ be the set of complex numbers $\lambda$ for which there exist functions $v,w\in L^2(\Omega)\setminus\left\{0\right\}$ with $v-w\in H^k_0(\Omega)$ solving the system
\[\left\{\!\!\begin{array}{l}
(P+W+V-\lambda)v=0,\\
(P-\lambda)w=0.
\end{array}\right.\]
Then the set $\Lambda$ is discrete, and each $\lambda\in\Lambda$ is of finite multiplicity.
\end{theorem}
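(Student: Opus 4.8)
The plan is to reduce the problem to deciding when a holomorphic family of Fredholm operators of index zero fails to be injective, and then to invoke the analytic Fredholm theorem. Writing $u=v-w$, the system is equivalent to $u\in H^k_0(\Omega)$, $(P-\lambda)w=0$, and $(P+W+V-\lambda)u=-(W+V)w$, with $v=u+w$; here $(W+V)w\in L^2(\Omega)$ because each $W_\alpha$ is supported in a fixed compact subset of $\Omega$, where constant-coefficient interior elliptic regularity makes $w$ smooth, while $Vw\in L^2$ since $V\in L^\infty$. I would fix the Friedrichs extension $P_0$ of the minimal extension of $P$ — a self-adjoint operator with discrete spectrum $\{\mu_j\}$, a discrete subset of $\mathbb R$ bounded below — and set $G(\lambda)=(P_0-\lambda)^{-1}$ for $\lambda\notin\{\mu_j\}$. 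Since $u$ lies in the minimal domain, $(P+W+V-\lambda)u=(P_0-\lambda)u+(W+V)u$, so the equation for $u$ reads $u=-G(\lambda)(W+V)(u+w)$, and its solution lies in $H^k_0(\Omega)$ precisely when $(W+V)v$ is orthogonal to $\ker(P_{\max}-\bar\lambda)$ (here $P_{\min}-\lambda$ has closed range for $\lambda\notin\{\mu_j\}$, being the restriction to $H^k_0(\Omega)$ of the boundedly invertible $P_0-\lambda$). Following Sylvester, one parametrises the Cauchy-data spaces $\ker(P_{\max}-\lambda)$ and $\ker(P_{\max}-\bar\lambda)$ by fixed model spaces — using transposes rather than adjoints where these spaces enter, to preserve holomorphy — and collects all of this into an operator equation $\mathcal A(\lambda)\Phi=0$ on a fixed Hilbert space $\mathcal H$ whose kernel is in bijection with the transmission eigenfunctions for every relevant $\lambda$.

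The next step would be to check that $\mathcal A(\lambda)=\mathcal B(\lambda)-\mathcal K(\lambda)$ with $\mathcal B(\lambda)$ boundedly invertible and $\mathcal K(\lambda)$ compact, and that $\lambda\mapsto\mathcal A(\lambda)$ is holomorphic on the connected set $\mathbb C\setminus\{\mu_j\}$. Compactness of $\mathcal K(\lambda)$ has three sources: $G(\lambda)$ (and its $\lambda$-derivatives) gains $k$ derivatives while $W$ costs only $e\leqslant k-1$ of them, so Rellich's theorem makes the blocks containing $W$ compact; the blocks coming from the Cauchy-data space are compact by interior elliptic regularity for $P$ together with the vanishing of the $W_\alpha$ near $\partial\Omega$; and the zeroeth-order term contributes multiplication by $V$ precomposed with the compact embedding $H^k_0(\Omega)\hookrightarrow L^2(\Omega)$. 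The single block that is compact but not small — multiplication by $V$ on the Cauchy-data space, where no derivatives are gained — is precisely the one handled by the coercivity hypothesis: since $V$ takes values in a closed half-plane missing $0$ near $\partial\Omega$, the corresponding zeroeth-order operator sits inside an invertible ``diagonal'' block, which is exactly the upper-triangular-compact mechanism of Sylvester's argument carried over to the higher-order main term. Holomorphy follows from that of $G(\lambda)$ and of the Cauchy-data parametrisation on $\mathbb C\setminus\{\mu_j\}$; the finitely many $\mu_j$ in any bounded region would be dispatched by a direct argument at those isolated values.

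The decisive step is to exhibit one $\lambda_0$ at which $\mathcal A(\lambda_0)$ is invertible, equivalently one $\lambda_0$ that is not a transmission eigenvalue; I would take $\lambda_0\in\mathbb R$ with $|\lambda_0|$ large and $\lambda_0$ below the lower bound of $P$. For such $\lambda$ one has $P(\xi)-\lambda\gg|\xi|^k+|\lambda|$, so $P-\lambda$ is elliptic with parameter, and parameter-dependent pseudodifferential estimates in the spirit of Robbiano give $\|u\|_{H^k(\Omega)}+|\lambda|\,\|u\|\ll\|(P+W+V-\lambda)u\|$ for all $u\in H^k_0(\Omega)$: the perturbation $W$ is absorbed because it has order $e\leqslant k-1$ with sufficiently smooth coefficients, so $\|Wu\|\ll|\lambda|^{-(k-e)/k}\bigl(\|u\|_{H^k}+|\lambda|\|u\|\bigr)$, and $V$ because $\|Vu\|\ll\|u\|\ll|\lambda|^{-1}\bigl(\|u\|_{H^k}+|\lambda|\|u\|\bigr)$. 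Applying this to $u=v-w$, and estimating $\|(W+V)w\|\ll\|w\|$ by interior regularity for $Ww$, yields $\|u\|_{H^k}+|\lambda|\|u\|\ll\|w\|$; moreover an Agmon/Carleman-type estimate for the parameter-elliptic operator $P-\lambda$ shows that $L^2$-solutions of $(P-\lambda)w=0$ concentrate near $\partial\Omega$ as $\lambda\to-\infty$, so $\|w\|_{L^2(K)}\ll_N|\lambda|^{-N}\|w\|$ for every $N$ and every fixed $K\Subset\Omega$, whence $v=u+w$ is also small in the interior. Testing the two equations against $u$ in the $L^2$ inner product — legitimate because $u$ is in the minimal domain and the adjoint of the maximal extension of $P$ is the minimal one — produces $\langle(W+V)v,v\rangle=\overline{\langle(W+V)v,u\rangle}$, which is therefore of size $\ll|\lambda|^{-1}\|w\|^2$; combined with the interior smallness of $v$ this forces $\int_U V|v|^2$ over a boundary collar $U$ in which $W$ vanishes identically to be $\ll|\lambda|^{-1}\|w\|^2$, and the coercivity of $V$ on $U$ then gives $\|v\|_{L^2(U)}\ll|\lambda|^{-1/2}\|w\|$, hence $\|v\|\ll|\lambda|^{-1/2}\|w\|$; since also $\|w\|\leqslant\|v\|+\|u\|\ll|\lambda|^{-1/2}\|w\|$, we get $w=0$, hence $v=0$. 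No unique continuation property for $P$ is needed here, which is special to the case $\nu=0$. With $\mathcal A(\lambda_0)$ invertible, the analytic Fredholm theorem on $\mathbb C\setminus\{\mu_j\}$ shows $\mathcal A(\lambda)^{-1}$ is meromorphic with a discrete pole set at which the kernel is finite-dimensional; unwinding the reduction and treating the $\mu_j$ separately gives that $\Lambda$ is discrete and that every $\lambda\in\Lambda$ has finite multiplicity.

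The main obstacle I expect is this third step: establishing the parameter-dependent a priori estimates for the general higher-order main term $P$, and converting them, via the boundary-collar argument, into the exclusion of a transmission eigenvalue. The delicacy is that the two features of the perturbation must be handled simultaneously — the higher-order part $W$ has to be absorbable by the parameter-ellipticity of $P$, which is where the smoothness of the $W_\alpha$ enters, and must be genuinely absent in the collar, which is where their vanishing near $\partial\Omega$ enters, while the zeroeth-order part $V$, for which the mere $L^\infty$ bound provides no smallness, can only be tamed through its coercivity near $\partial\Omega$ — the very hypothesis that also makes the non-compact block in the second step harmless.
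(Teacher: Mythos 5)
Your plan hinges on the classical analytic Fredholm theorem, so you must exhibit $\mathcal A(\lambda)$ as (invertible for every $\lambda$ in the connected domain) plus (compact), and this is exactly where the proposal breaks. The block you yourself single out---multiplication by $V$ acting on the space of free solutions $\ker(P_{\max}-\lambda)$, paired against $\ker(P_{\max}-\bar\lambda)$---is \emph{not} compact: $V$ is merely $L^\infty$ and does not vanish near $\partial\Omega$, while elements of $\ker(P_{\max}-\lambda)$ satisfy no boundary condition and may concentrate arbitrarily close to $\partial\Omega$, where no interior regularity gain is available; the resulting operator is only bounded. In the paper this is precisely the resolvent entry $R_{21}$ of Proposition \ref{resolvent-of-the-born-approximation}, which is only bounded, with $\chi R_{21}$ compact for $\chi\in C_{\mathrm c}^\infty(\Omega)$, and it is the very reason Sylvester's framework of \emph{upper triangular} compact resolvents (Theorem \ref{UTC-resolvent-theorem}) is needed instead of the ordinary analytic Fredholm theorem. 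Your fallback, that coercivity of $V$ near $\partial\Omega$ lets this block ``sit inside an invertible diagonal block'', is the step that requires all the real work and is not available for free: coercivity together with the parameter-dependent a priori estimates yields invertibility of the Born approximation only for large negative real $\lambda$ (Lemma \ref{resolvent-estimates} and Proposition \ref{resolvent-of-the-born-approximation}), not on all of $\mathbb C\setminus\sigma(P_0)$; and if you instead keep the block in the compact part, the claimed compactness is false and the classical theorem does not apply. So the decisive implication ``injective at one large negative $\lambda_0$ $\Rightarrow$ discrete exceptional set with finite multiplicities in all of $\mathbb C$'' is unsupported unless you either prove the upper-triangular-compact analogue of analytic Fredholm theory (which you neither state nor prove) or genuinely build the $V$-coupling into a factor invertible for all $\lambda$.

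Two secondary points, and one credit. First, the holomorphic parametrisation of $\ker(P_{\max}-\lambda)$ by a fixed model space is asserted rather than constructed, and is misattributed: Sylvester avoids it altogether by working with the block operator on the fixed spaces $H^k_0(\Omega)\times H^k_P(\Omega)$. It can be done, e.g.\ $I+(\lambda-\lambda_0)(P_0-\lambda)^{-1}$ maps $\ker(P_{\max}-\lambda_0)$ bijectively onto $\ker(P_{\max}-\lambda)$, but only for $\lambda\notin\sigma(P_0)$, and your scheme then leaves open the possibility that transmission eigenvalues accumulate at the points $\mu_j$; ``a direct argument at those isolated values'' does not automatically exclude this, whereas the paper's route never inverts $P_0-\lambda$ and has no exceptional points at all. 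Second, the rapid interior decay $\|w\|_{L^2(K)}\ll_N|\lambda|^{-N}\|w\|$ is invoked without proof; fortunately only a fixed negative power (as in Lemma \ref{estimates-for-w}) is needed. On the positive side, your exclusion of large negative real transmission eigenvalues---the parameter-elliptic estimate for $u\in H^k_0(\Omega)$, absorption of $W$ and $V$, interior smallness of $w$, the quadratic-form identity obtained by testing against $u$, and the boundary-collar coercivity argument---is sound in outline and closely parallels Lemma \ref{resolvent-estimates} and the injectivity part of Proposition \ref{resolvent-of-the-born-approximation}; it is the global Fredholm scaffolding around it that is missing.
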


\begin{remark}
Here $\lambda\in\Lambda$ is said to be of finite multiplicity if the space of pairs of solutions $\left(v,w\right)\in L^2(\Omega)\times L^2(\Omega)$ with $v-w\in H_0^k(\Omega)$ to the above system is finite dimensional.
\end{remark}

\begin{remark}Though $v$ and $w$ are a priori only in $L^2(\Omega)$, it follows from the second equation that $w\in H^k_P(\Omega)$, and from the condition for $v-w$ that also $v\in H^k_P(\Omega)$. Lemma \ref{estimates-for-w} below then guarantees that also $Ww$ is a well-defined $L^2$-function and again the condition for $v-w$ guarantees that the same is true for $Wv$.
\end{remark}

For the Helmholtz case we have the following theorem, which generalizes one of the two cases of Theorem 1.2 in \cite{Sylvester}. As the proof depends on unique continuation, we first define a subclass of elliptic operators following \cite{Wang}.
\begin{definition*}
An elliptic homogeneous constant coefficient partial differential operator $P_0$ is said to be of class $G$, if there exists a unit vector $\eta\in S^{n-1}$ such that for any $\xi\in\mathbb R^n$ with $P_0(\xi+i\eta)=0$ we have
\begin{itemize}\setlength{\itemsep}{0pt}
\item[(G1)] $\eta\cdot(\nabla_zP_0)(\xi+i\eta)\neq0$, where $\nabla_zP_0$ is the gradient $(\partial P_0/\partial z_1,\ldots,\partial P_0/\partial z_n)$ where $P_0=P_0(z_1,\ldots,z_n)$ is the obvious complex polynomial in $\mathbb C^n$, and
\item[(G2)] $H(P_0)(\xi+i\eta)\neq0$, where $H(P_0)(\cdot)$ is the determinant of the complex Hessian matrix of $P_0(\cdot)$.
\end{itemize}
The condition (G1) is called Calder\'on's simple characteristic condition, and the condition (G2) is called the curvature condition. Here $P_0(\cdot)$ denotes the symbol of $P_0$.
\end{definition*}

\begin{theorem}\label{helmholtz-main-theorem}
Let $P$ be a formally self-adjoint elliptic constant coefficient partial differential operator of order $k\in\mathbb Z_+$ in $\mathbb R^n$ bounded from below and with principal part of class $G$, let $\Omega$ be a bounded nonempty subset of $\mathbb R^n$.
Let $V\in L^\infty(\Omega)$ be such that $\Re V\geqslant-1+\delta$ in $\Omega$, and that in some neighbourhood of $\partial\Omega$ it only takes values with real parts in $\left[-1+\delta,-\delta\right]$, where $\delta\in\left]0,1\right[$.

Furthermore, let $W_\alpha\in W^{\left|\alpha\right|,\infty}_{\mathrm c}(\Omega)$ be complex-valued for each multi-index $\alpha$ with $1\leqslant\left|\alpha\right|\leqslant e$, where $e\in\left\{0,1,\ldots,k-1\right\}$, and write $W$ for the partial differential operator
\[W=\sum_{1\leqslant\left|\alpha\right|\leqslant e}W_\alpha\partial^\alpha.\]
In the case $e=0$ we set $W=0$. If $e>0$ or if $P$ is not homogeneous, we assume that $k<n/2$. Assume also that $\left\|\Im V\right\|_{L^\infty}$ is sufficiently small, depending on $\Omega$, $P$ and $W$.

Let $\Lambda$ be the set of non-zero complex numbers $\lambda$ for which there exist functions $v,w\in L^2(\Omega)\setminus\left\{0\right\}$ with $v-w\in H^k_0(\Omega)$ solving the system
\[\left\{\!\!\begin{array}{l}
(P+W+\lambda V-\lambda)v=0,\\
(P-\lambda)w=0.
\end{array}\right.\]
Then the set $\Lambda$ is a discrete subset of $\mathbb C^\times$, and each $\lambda\in\Lambda$ is of finite multiplicity. If $e=0$, then $\Lambda$ is a discrete subset of $\mathbb C$.
\end{theorem}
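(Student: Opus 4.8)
The plan is to recast the interior transmission problem, in the spirit of Sylvester's approach via upper triangular compact operators \cite{Sylvester}, as the assertion that a holomorphic family $\lambda\mapsto T(\lambda)$ of bounded operators on a fixed Hilbert space, defined for $\lambda\in\mathbb C^\times$, fails to be injective exactly when $\lambda\in\Lambda$, with $\dim\ker T(\lambda)$ then equal to the multiplicity. Setting $u=v-w$, which lies in $H^k_0(\Omega)$, the two equations give $(P-\lambda)u=-(W+\lambda V)v$ and $(P-\lambda)w=0$, so $(v,w)$ is recovered from $u$ together with one auxiliary $L^2$-datum; conversely, the requirement $v-w\in H^k_0(\Omega)$ with $(P-\lambda)w=0$ says precisely that $(P-\lambda)v=-(W+\lambda V)v$ lies in the range of the minimal realization $P-\lambda\colon H^k_0(\Omega)\to L^2(\Omega)$. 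Encoding these relations as one operator equation and, when $e>0$, dividing through by $\lambda$ so that the perturbation enters as $V+W/\lambda$ — which is why $\lambda=0$ has to be removed in that case, whereas for $e=0$ no division occurs and the whole construction is holomorphic on all of $\mathbb C$ — produces the pencil $T(\lambda)$.

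The second step is to verify that $T(\lambda)$ is Fredholm of index zero for every $\lambda\in\mathbb C^\times$. The block structure — interior versus a neighbourhood of $\partial\Omega$, and the $u$-component versus the auxiliary component — is arranged so that $T(\lambda)$ is a compact perturbation of an upper triangular operator whose diagonal blocks carry $P-\lambda$ and the multiplier $V-1$, the latter being coercive near $\partial\Omega$ by hypothesis (there $\Re(V-1)$ lies in $[-2+\delta,-1-\delta]$). The \emph{a priori} estimates established earlier by parameter-dependent pseudodifferential calculus in the manner of Robbiano \cite{Robbiano} show these diagonal blocks to be boundedly invertible, while the off-diagonal and remainder terms are compact: the positive-order part $W$, whose coefficients are supported in a fixed compact subset of $\Omega$, contributes operators that gain derivatives and are hence compact by Rellich's theorem, and the bound $k<n/2$ — imposed exactly when such lower-order contributions are present, i.e. when $e>0$ or $P$ is inhomogeneous — supplies the Sobolev embeddings needed to absorb them, and it reappears in the unique continuation step below; the zeroeth order multiplier $V$ composed with an operator of order $-k$ is likewise compact. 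Thus $T(\lambda)=A(\lambda)+K(\lambda)$ with $A(\lambda)$ invertible, $K(\lambda)$ compact, and $T$ holomorphic.

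By the analytic Fredholm theorem, either $T(\lambda)$ is non-invertible for every $\lambda\in\mathbb C^\times$, or the set where it fails to be invertible is discrete in $\mathbb C^\times$ with finite dimensional kernels; in the latter case that set is exactly $\Lambda$ and the theorem follows, and for $e=0$ the same argument over $\mathbb C$ yields discreteness in $\mathbb C$. To rule out the first alternative we must exhibit one $\lambda$ at which $T(\lambda)$ is invertible, equivalently at which the transmission problem has only the trivial solution. This is carried out by a perturbation argument for $|\lambda|$ large: in a suitable region $P-\lambda$ is, by the \emph{a priori} estimates, bounded below with a large lower bound, and one shows that $P+W+\lambda(V-1)$ cannot support a nontrivial solution compatible with the range condition. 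The smallness of $\left\|\Im V\right\|_{L^\infty}$ is used to control the imaginary part of the governing quadratic form; and since $\Re(V-1)$ need not have a favourable sign throughout $\Omega$, one argues by contradiction: a hypothetical eigenfunction is forced to vanish near $\partial\Omega$, whereupon the unique continuation principle — available because the principal part of $P$ is of class $G$ \cite{Wang}, and using $k<n/2$ when $P$ is inhomogeneous — propagates the vanishing throughout $\Omega$, contradicting nontriviality.

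The crux is this last step. Because the Helmholtz perturbation $\lambda V$ grows with $\lambda$, it cannot be treated as a small perturbation of $P-\lambda$, and the interplay of the near-boundary coercivity of $V-1$, the smallness of $\Im V$, and unique continuation for the higher-order main term has to be orchestrated with care; moreover the presence of $W/\lambda$ in this argument is what blocks any conclusion about $\lambda$ near $0$. Once a reference point is secured, the reduction, the compactness of $K(\lambda)$, the holomorphy, and the invocation of the analytic Fredholm theorem are routine given the \emph{a priori} estimates.
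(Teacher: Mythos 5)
Your overall skeleton matches the paper's: reduce via $u=v-w$, rescale by $\lambda$ so the coupling appears as $W/\lambda+V$ (whence the exclusion of $\lambda=0$ when $e>0$), anchor everything at one large negative energy where the near-boundary coercivity, the smallness of $\left\|\Im V\right\|_{L^\infty}$ and Wang's weak unique continuation give injectivity, and then conclude discreteness and finite multiplicity by an analytic Fredholm alternative. However, the operator-theoretic step on which you hang the Fredholm alternative is wrong as stated, and this is a genuine gap rather than a presentational one.

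You claim that $T(\lambda)$ is Fredholm of index zero for \emph{every} $\lambda\in\mathbb C^\times$ because it is a compact perturbation of an upper triangular operator whose diagonal blocks ($P-\lambda$ and the multiplier $V-1$) are boundedly invertible, with the $W$- and $V$-contributions compact. None of this survives inspection: in the natural realization the diagonal blocks are the minimal realization $P-\lambda\colon H^k_0(\Omega)\to L^2(\Omega)$ and the maximal realization $P-\lambda\colon H^k_P(\Omega)\to L^2(\Omega)$; for the relevant $\lambda$ the maximal one has an infinite-dimensional kernel (all $L^2$ solutions of $(P-\lambda)w=0$ in $\Omega$) and the minimal one an infinite-dimensional cokernel, so neither is invertible nor even Fredholm. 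Moreover multiplication by $V\in L^\infty(\Omega)$ is not compact on $L^2(\Omega)$, and the off-diagonal coupling $W/\lambda+V$ cannot be discarded as a compact error: it is precisely this coupling, together with the coercivity of $V$ near $\partial\Omega$, that makes the full $2\times2$ Born operator bijective at large real $\lambda$ (via the a priori estimates and the closed range theorem applied to the adjoint system). Even at that good reference point the inverse is only \emph{upper triangular} compact: $R_{11}$, $R_{12}$, $R_{22}$ are compact, but $R_{21}$ is merely bounded (only $\chi R_{21}$ is compact), and the compactness of $R_{22}$ needs a separate argument using the coercivity, not Rellich alone. For the same reason $T(\lambda)-T(\lambda_0)$ contains the non-compact multipliers $\lambda-\lambda_0$ and $\lambda V$, so the classical analytic Fredholm theorem you invoke does not apply; what is needed, and what the paper uses, is Sylvester's analytic Fredholm alternative for families $1+A(\lambda)$ with $A$ upper triangular compact, applied twice -- once to propagate invertibility of the Born approximation from a single large $\lambda_0$, and once to bolt on the large perturbation $\lambda V$ after the injectivity of the full pencil at large real $\lambda$ has been proved. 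Two smaller points: the hypothesis $k<n/2$ is used only to apply the unique continuation theorems of \cite{Wang}, not for any Sobolev embedding or compactness; and the large-$\lambda$ injectivity, which you rightly identify as the crux, is only gestured at -- the paper's actual argument is a quadratic-form identity whose left-hand side has real part $\gg\lambda$ (using $\Re V\geqslant-1+\delta$, the $P$-form infinitesimality of $W$, the smallness of $\Im V$, and unique continuation to guarantee strict positivity), while the right-hand side is non-positive up to $o(\lambda)$ by the boundary coercivity and the a priori estimates for $w$, $\chi w$ and $Ww$; supplying that computation is essential to the proof.
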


\begin{remark}
Actually, the proof offers more flexibility. For instance, we may let $W_\alpha$ and $V$ depend analytically on $\lambda\in D$, where $D$ is a connected neighbourhood of $\mathbb R$ in $\mathbb C$, to get discreteness and finite multiplicity in $D$ or $D\setminus\left\{0\right\}$, as appropriate. The coefficients $W_\alpha$ should vanish in a neighbourhood of the boundary which does not depend on $\lambda$, and their $W^{\left|\alpha\right|,\infty}$-norms should be uniformly bounded for large negative reals $\lambda$. In fact we could allow even a little growth such as $\left\|W_\alpha\right\|_{W^{\left|\alpha\right|,\infty}}\ll\left|\lambda\right|^{\delta}$ for small enough $\delta\in\mathbb R_+$. Similarly, in a compact set of $\Omega$, the potential $V$ can change analytically fairly arbitrarily as long as its $L^\infty$-norm stays uniformly bounded, or grows slowly enough, for large negative reals $\lambda$. Near the boundary, one could similarly allow analytic dependence as long as the details of the coercivity condition are uniform for large negative~$\lambda$.
\end{remark}

\begin{remark}
The conditions in the above theorems for $W_\alpha$ are formulated in terms of Sobolev spaces for simplicity. In fact, the proof only requires that $\partial^\beta W_\alpha\in L^\infty(\Omega)$ for multi-indices $\beta$ with $\beta_1\leqslant\alpha_1$, $\beta_2\leqslant\alpha_2$, \dots, $\beta_n\leqslant\alpha_n$. This condition comes from requiring that the adjoint $W^*$ has $L^\infty$-coefficients.
\end{remark}

\begin{remark}
The condition that $P$ should have a principal part of class $G$ and the condition on $m<n/2$ are only required to apply the weak unique continuation theorems, Theorems 1.3 and 1.4, from \cite{Wang}.
\end{remark}

\begin{remark}
In both main theorems the method of proof actually implies about real transmission eigenvalues that $\left]-\infty,-C\right]\cap\Lambda=\emptyset$ for some $C\in\mathbb R_+$ which depends on $\Omega$ and all the operators in question.
\end{remark}

\begin{remark}
Theorem \ref{helmholtz-main-theorem} does not exclude the possibility that the transmission eigenvalues accumulate to zero, unless $W=0$.
\end{remark}

\section{Proofs of Theorems \ref{schrodinger-main-theorem} and \ref{helmholtz-main-theorem}}

Our overall strategy for proving Theorems \ref{schrodinger-main-theorem} and \ref{helmholtz-main-theorem} is the same as in \cite{Sylvester}. Setting $u=v-w$, instead of the interior transmission problem we consider the equivalent problem for $u\in H^k_0(\Omega)$ and $w$:
\[
\begin{cases}
(P+Q-\lambda)u+Qw=0&\text{in $\Omega$,}\\
(P-\lambda)w=0&\text{in $\Omega$.}
\end{cases}
\]
Here $u$, in a sense, satisfies many ``boundary conditions'', but $w$ satisfies none. We first consider the ``Born approximation'' by simply striking out the term $Qu$ or $\lambda Vu$, depending on whether $\nu=0$ or $\nu=1$. Once the properties of the resolvent operators of the simpler case has been dealt with, the missing term can be brought back in with a perturbation argument. This is particularly simple in the Schr\"odinger case but requires more work in the Helmholtz situation.

The key goal is to prove that the Born approximation has well-defined resolvent operators when $\lambda$ is a negative real number and tends to $-\infty$, that their norms are well controlled, and that they are upper triangular compact. To do all this, we will first have to prove some a priori estimates. Following \cite{Robbiano}, we do so by using parameter-dependent pseudodifferential calculus.

We first review the pseudodifferential calculus needed, then establish the relevant a priori estimates. These estimates are then applied carefully with basic functional analysis to establish the resolvent operators and their good properties for the Born approximation. Finally, perturbation arguments allow us to move back to the original interior transmission problems, and the analogue of the analytic Fredholm theorem for upper triangular compact operators finishes the proof.

In this entire section, $P$ will be as in Theorems \ref{schrodinger-main-theorem} and \ref{helmholtz-main-theorem}: a fixed elliptic formally self-adjoint constant coefficient partial differential operator bounded from below and of fixed order $k\in\mathbb Z_+$. The symbol of $P$ will be denoted by the same letter, typically by writing $P(\cdot)$.

\subsection{Parameter-dependent pseudodifferential operators}

The proofs for the estimates for studying the Born approximation will make use of parameter-dependent pseudodifferential operators. In order to make the presentation more self-contained, we present here, with just enough generality, the relevant definitions and results. A standard reference for this topic is \cite{Shubin}.

Let $m\in\mathbb R$ and $k\in\mathbb Z_+$, and let $\Lambda\subseteq\left[1,\infty\right[$ be unbounded. For us, $\Lambda$ will be an interval $\left[C,\infty\right[$ for some, typically very large $C\in\mathbb R_+$. Here $m$ denotes the order of the pseudodifferential operator, and for us $k$ will be the order of the main terms in the interior transmission problem.

The symbol class $S^m_{1,0,k}(\mathbb R^n\times\mathbb R^n;\Lambda)$ consists of all those functions \[\sigma\colon\mathbb R^n\times\mathbb R^n\times\Lambda\longrightarrow\mathbb C,\] for which
$\sigma(\cdot,\cdot,\lambda)\in C^\infty(\mathbb R^n\times\mathbb R^n)$ for each $\lambda\in\Lambda$, and
\[\partial_\xi^\alpha\partial_x^\beta\sigma(x,\xi,\lambda)\ll_{\alpha,\beta,\sigma}
\left(\lambda^{1/k}+\left|\xi\right|\right)^{m-\left|\alpha\right|}\]
for all $x,\xi\in\mathbb R^n$ and $\lambda\in\Lambda$, and for all multi-indices $\alpha$ and $\beta$.

The operator $\mathrm{Op}(\sigma)$ corresponding to a symbol $\sigma\in S^m_{1,0,k}(\mathbb R^n\times\mathbb R^n;\Lambda)$ is defined for Schwartz test functions $\varphi\in S(\mathbb R^n)$ by the formula
\[\bigl(\mathrm{Op}(\sigma)\varphi\bigr)(x,\lambda)=\int\limits_{\mathbb R^n}e^{2\pi ix\cdot\xi}\,\sigma(x,\xi,\lambda)\,\widehat\varphi(\xi)\,\mathrm d\xi\]
for $x\in\mathbb R^n$ and $\lambda\in\Lambda$. All our pseudodifferential operators will depend on $\lambda$, but we will simplify the notation by writing $\sigma(x,\xi)$ and $\bigl(\mathrm{Op}(\sigma)\varphi\bigr)(x)$ without the lambdas.

If $m\leqslant0$, then $\mathrm{Op}(\sigma)$ extends to a bounded operator from $L^2(\mathbb R^n)$ to $L^2(\mathbb R^n)$, for each $\lambda\in\Lambda$, and more precisely, the operator norm has the pleasant upper bound
\[\bigl\|\mathrm{Op}(\sigma)\bigr\|\ll_\sigma\lambda^{m/k}.\]
We would like to specifically add that for $m=0$,
\[\bigl\|\mathrm{Op}(\sigma)\bigr\|_{H^s(\mathbb R^n)\longrightarrow H^s(\mathbb R^n)}\ll_{\sigma,s}1,\]
for all $s\in\mathbb R$, uniformly for $\lambda\in\Lambda$.
Also, more generally for $m\in\mathbb R$, the operator $\mathrm{Op}(\sigma)$ extends to a bounded operator from the Sobolev space $H^s(\mathbb R^n)$ to $H^{s-m}(\mathbb R^n)$, again for each $\lambda\in\Lambda$.

Another important basic fact concerning pseudodifferential operators is that the composition of pseudodifferential operators is again a pseudodifferential operator, and, modulo lower-order terms, the symbol of the composition is the product of the symbols of the original operators. More precisely, if we have $\sigma_1\in S^{m_1}_{1,0,k}(\mathbb R^n\times\mathbb R^n;\Lambda)$ and $\sigma_2\in S^{m_2}_{1,0,k}(\mathbb R^n\times\mathbb R^n;\Lambda)$ for some $m_1,m_2\in\mathbb R$, then
\[\mathrm{Op}(\sigma_1)\,\mathrm{Op}(\sigma_2)-\mathrm{Op}(\sigma_1\sigma_2)
\in\mathrm{Op}\!\left[S^{m_1+m_2-1}_{1,0,k}(\mathbb R^n\times\mathbb R^n;\Lambda)\right].\]

\subsection{Estimates for the Born approximation}

We will consider large negative energies. But since it is easier to consider positive reals, we shall write $P+\lambda$ with $\lambda>0$ instead of $P-\lambda$ with $\lambda<0$.

\begin{lemma}\label{estimates-for-u}
Let $u\in H_0^k(\Omega)$, $f\in L^2(\Omega)$, and $\lambda\in\mathbb R_+$, and assume that
\[\left(P+\lambda\right)u=f\]
in $\Omega$. Then, for $\lambda\gg1$,
\[\sum_{\left|\alpha\right|\leqslant k}\lambda^{-\left|\alpha\right|/k}\bigl\|\partial^\alpha u\bigr\|
\ll\frac1\lambda\bigl\|f\bigr\|.\]
\end{lemma}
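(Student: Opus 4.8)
The plan is to derive this as an a priori estimate using the parameter-dependent pseudodifferential calculus just reviewed, exactly in the spirit of \cite{Robbiano}. Since $u\in H^k_0(\Omega)$, we may extend $u$ by zero to all of $\mathbb R^n$ and regard it as an element of $H^k(\mathbb R^n)$ supported in $\overline\Omega$; the equation $(P+\lambda)u=f$ then holds in $\mathbb R^n$ with $f$ likewise extended by zero (this is where the $H^k_0$ hypothesis, rather than mere $H^k_P$, is essential). The operator $P+\lambda$ has symbol $P(\xi)+\lambda$, and since $P$ is elliptic of order $k$ and bounded from below, its principal symbol satisfies $P_{\mathrm{pr}}(\xi)\asymp\left|\xi\right|^k$, so that $\left|P(\xi)+\lambda\right|\gg\left(\lambda^{1/k}+\left|\xi\right|\right)^k$ for $\lambda\gg1$ uniformly in $\xi$. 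Hence the symbol
\[
q(x,\xi,\lambda)=\frac{1}{P(\xi)+\lambda}
\]
lies in $S^{-k}_{1,0,k}(\mathbb R^n\times\mathbb R^n;\Lambda)$ for $\Lambda=\left[C,\infty\right[$ with $C$ large, as one checks by differentiating the quotient and using the ellipticity lower bound together with the obvious bounds $\partial^\alpha_\xi P(\xi)\ll\left(\lambda^{1/k}+\left|\xi\right|\right)^{k-\left|\alpha\right|}$.

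Next I would build a parametrix. Because $P+\lambda$ has constant coefficients, $\mathrm{Op}(q)$ is in fact an exact inverse: $\mathrm{Op}(q)\,(P+\lambda)=\mathrm{Op}(q)\,\mathrm{Op}(P(\xi)+\lambda)=\mathrm{Op}(1)=\mathrm{Id}$ on $\mathcal S(\mathbb R^n)$ (no lower-order error terms arise since the symbols depend only on $\xi$), and this identity extends to $H^k(\mathbb R^n)$ by density. Therefore $u=\mathrm{Op}(q)f$. Now for each multi-index $\alpha$ with $\left|\alpha\right|\leqslant k$, the operator $\partial^\alpha\mathrm{Op}(q)$ has symbol $(2\pi i\xi)^\alpha q(x,\xi,\lambda)\in S^{\left|\alpha\right|-k}_{1,0,k}(\mathbb R^n\times\mathbb R^n;\Lambda)$, which has order $\left|\alpha\right|-k\leqslant0$. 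By the $L^2$-boundedness statement from the pseudodifferential calculus above, its operator norm on $L^2(\mathbb R^n)$ is $\ll\lambda^{(\left|\alpha\right|-k)/k}=\lambda^{\left|\alpha\right|/k-1}$. Applying this to $f$ and multiplying by $\lambda^{-\left|\alpha\right|/k}$ gives
\[
\lambda^{-\left|\alpha\right|/k}\bigl\|\partial^\alpha u\bigr\|_{L^2(\mathbb R^n)}
=\lambda^{-\left|\alpha\right|/k}\bigl\|\partial^\alpha\mathrm{Op}(q)f\bigr\|_{L^2(\mathbb R^n)}
\ll\frac1\lambda\bigl\|f\bigr\|_{L^2(\mathbb R^n)},
\]
and summing over the finitely many $\alpha$ with $\left|\alpha\right|\leqslant k$ yields the claim, since the $L^2(\mathbb R^n)$-norms of $u$, its derivatives, and $f$ agree with the corresponding $L^2(\Omega)$-norms.

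The only genuinely delicate point is verifying that $q=1/(P(\xi)+\lambda)$ really belongs to $S^{-k}_{1,0,k}$ with constants uniform in $\lambda$, i.e.\ that the ellipticity-plus-boundedness of $P$ forces $\left|P(\xi)+\lambda\right|\gg(\lambda^{1/k}+\left|\xi\right|)^k$ for all $\xi\in\mathbb R^n$ once $\lambda$ is large; this needs the principal symbol to be bounded below by a positive multiple of $\left|\xi\right|^k$ (which follows from ellipticity and, say, the homogeneity of the principal part, together with the fact that lower-order terms of $P$ are dominated by $\varepsilon\left|\xi\right|^k+C_\varepsilon$), and it needs $\lambda+\inf P>0$. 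Once this symbol estimate is in hand, the derivative bounds for $q$ are a routine induction and everything else is a direct appeal to the stated mapping properties, so I do not expect further obstacles. A small bookkeeping remark: here there are no lower-order parametrix error terms precisely because $P$ has constant coefficients, which is what makes the estimate this clean; the same argument with variable coefficients would produce an error in $\mathrm{Op}[S^{-1}_{1,0,k}]$ that would have to be absorbed.
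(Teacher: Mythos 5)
Your proposal is correct and is essentially the paper's own argument: both extend $u$ and $f$ by zero (using $u\in H^k_0(\Omega)$) and both rest on the uniform two-sided bound $P(\xi)+\lambda\asymp\bigl(\lambda^{1/k}+\left|\xi\right|\bigr)^k$ for $\lambda\gg1$ together with Plancherel. The paper simply divides by the non-vanishing symbol on the Fourier side and writes the resulting multiplier estimate directly on $\widehat u$, whereas you package the same computation as the parameter-dependent symbol $1/(P(\xi)+\lambda)\in S^{-k}_{1,0,k}$ and invoke the stated $L^2$-norm bounds; since the operator is an $x$-independent Fourier multiplier, the two formulations coincide.
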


\begin{proof}
We extend $u$ and $f$ by zero extensions to $\mathbb R^n$, so that $u\in H^k(\mathbb R^n)$ and $f\in L^2(\mathbb R^n)$. Also, now
\[\left(P+\lambda\right)u=f\]
in $\mathbb R^n$. Taking Fourier transforms, we have
\[\bigl(P(\cdot)+\lambda\bigr)\widehat u=\widehat f\]
in $\mathbb R^n$, and we immediately get
\begin{align*}
\sum_{\left|\alpha\right|\leqslant k}\lambda^{-\left|\alpha\right|/k}\bigl\|\partial^\alpha u\bigr\|
&\asymp\bigl\|\bigl(\lambda^{-1/k}\left|\cdot\right|+1\bigr)^k\widehat u\bigr\|
\asymp\frac1\lambda\bigl\|\bigl(P(\cdot)+\lambda\bigr)\widehat u\bigr\|
=\frac1\lambda\bigl\|\widehat f\,\bigr\|
=\frac1\lambda\bigl\|f\bigr\|.
\end{align*}
\end{proof}

\begin{lemma}\label{estimates-for-w}
Let $w\in H^k_{P}(\Omega)$, $g\in L^2(\Omega)$, $\lambda\in\mathbb R_+$, and let us fix some cut-off function $\chi\in C_{\mathrm c}^\infty(\Omega)$. Assume that
\[\left(P+\lambda\right)w=g\]
in $\Omega$. Then, for $\lambda\gg1$,
\[\bigl\|\chi w\bigr\|\ll\lambda^{-1/k}\bigl\|w\bigr\|+\lambda^{-1}\bigl\|g\bigr\|,\]
and for multi-indices $\alpha$ with $\left|\alpha\right|\leqslant k$,
\[\bigl\|\partial^\alpha(\chi w)\bigr\|\ll\bigl\|w\bigr\|+\lambda^{\left|\alpha\right|/k-1}\bigl\|g\bigr\|.\]
\end{lemma}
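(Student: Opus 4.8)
The plan is to localise $w$ with a chain of cut-offs and feed the pieces into Lemma~\ref{estimates-for-u}, exploiting that the commutator $[P,\chi]$ drops one order below $P$ so that each localisation step produces a genuine gain of a power of $\lambda$. Two standard facts make the localisation legitimate. Since $w\in H^k_P(\Omega)$ means $w,Pw\in L^2$, interior elliptic regularity for the fixed constant-coefficient elliptic operator $P$ gives $w\in H^k_{\mathrm{loc}}(\Omega)$, whence $\phi w\in H^k(\mathbb R^n)$ with compact support in $\Omega$, i.e.\ $\phi w\in H^k_0(\Omega)$, for every $\phi\in C^\infty_{\mathrm c}(\Omega)$; and for such $\phi$ with $\operatorname{supp}\phi\Subset\Omega$ the interior estimate $\|\phi w\|_{H^k}\ll\|Pw\|_{L^2}+\|w\|$ holds with a constant independent of $\lambda$ (as $P$ is fixed), which in view of $Pw=g-\lambda w$ reads $\|\phi w\|_{H^k}\ll\|g\|+\lambda\|w\|$. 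Moreover, by the Leibniz rule $[P,\phi]$ is a differential operator of order at most $k-1$ with coefficients supported in $\operatorname{supp}\nabla\phi$, so if $\widetilde\phi\equiv1$ on a neighbourhood of $\operatorname{supp}\phi$ then $(P+\lambda)(\phi w)=\phi g+[P,\phi](\widetilde\phi w)$ with $\|[P,\phi](\widetilde\phi w)\|\ll_\phi\sum_{|\beta|\leqslant k-1}\|\partial^\beta(\widetilde\phi w)\|$.

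Writing $B(\phi)=\sum_{|\alpha|\leqslant k}\lambda^{-|\alpha|/k}\|\partial^\alpha(\phi w)\|$, I would apply Lemma~\ref{estimates-for-u} to $\phi w\in H^k_0(\Omega)$ with right-hand side $\phi g+[P,\phi](\widetilde\phi w)$, and then use the commutator bound together with the elementary inequality $\sum_{|\beta|\leqslant k-1}\|\partial^\beta(\widetilde\phi w)\|\leqslant\lambda^{(k-1)/k}B(\widetilde\phi)$ (valid since $\lambda\geqslant1$). This produces the recursion
\[
B(\phi)\ll\lambda^{-1}\|g\|+\lambda^{-1}\cdot\lambda^{(k-1)/k}B(\widetilde\phi)=\lambda^{-1}\|g\|+\lambda^{-1/k}B(\widetilde\phi),
\]
the point being that the factor $1/\lambda$ from Lemma~\ref{estimates-for-u} more than compensates the $\lambda^{(k-1)/k}$ loss in the commutator.

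Next I would fix the given $\chi$ and choose a chain $\chi=\chi_0,\chi_1,\dots,\chi_N\in C^\infty_{\mathrm c}(\Omega)$ with $N=2k$, each $\chi_{j+1}\equiv1$ near $\operatorname{supp}\chi_j$, and $\operatorname{supp}\chi_N\Subset\Omega$; this is possible because $\operatorname{supp}\chi$ lies at a positive distance from $\partial\Omega$ and only finitely many steps are needed. Iterating the displayed inequality along the chain and using $\sum_{i\geqslant0}\lambda^{-i/k}\ll1$ gives $B(\chi_0)\ll\lambda^{-1}\|g\|+\lambda^{-N/k}B(\chi_N)$. For the tail I would use that the weights in $B$ are at most $1$, so $B(\chi_N)\ll\|\chi_N w\|_{H^k}\ll\|g\|+\lambda\|w\|$ by the interior estimate recorded above; with $N=2k$ this yields $B(\chi_0)\ll\lambda^{-1}\|g\|+\lambda^{-1}\|w\|$. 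The first asserted estimate is then simply the $\alpha=0$ term of $B(\chi_0)$, together with $\lambda^{-1}\leqslant\lambda^{-1/k}$; for the second, for each $\alpha$ with $|\alpha|\leqslant k$ we have $\|\partial^\alpha(\chi w)\|\leqslant\lambda^{|\alpha|/k}B(\chi_0)\ll\lambda^{|\alpha|/k-1}\|g\|+\lambda^{|\alpha|/k-1}\|w\|$, and $\lambda^{|\alpha|/k-1}\leqslant1$ absorbs the $\|w\|$-coefficient into the implicit constant.

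I expect the only genuinely delicate point to be this last power count: a single auxiliary cut-off does not suffice, since the crude tail bound contributes a term $\lambda\|w\|$ and one $\lambda^{-1/k}$-gain would leave the divergent factor $\lambda^{1-1/k}$. One must iterate the gain enough times to beat whatever crude a priori bound (here of size $\lambda^{1}$) terminates the recursion, which forces a chain of length at least $2k$. Everything else — the commutator computation, the use of interior regularity, and the fact that $\chi w\in H^k_0(\Omega)$ — is routine, the last two being precisely where the hypothesis $w\in H^k_P(\Omega)$ is used.
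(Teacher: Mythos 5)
Your proof is correct, but it takes a genuinely different route from the paper. The paper stays inside the parameter-dependent pseudodifferential calculus it has just set up: it builds the parametrix $\mathrm{Op}\bigl(\chi/(P(\cdot)+\lambda)\bigr)$, uses the composition calculus to write $B\chi_1(P+\lambda)=\chi+K$ with $K$ of order $-1$ (hence norm $\ll\lambda^{-1/k}$), and then runs an induction on $\left|\alpha\right|$ using $\partial^\gamma K\in\mathrm{Op}\bigl[S^0_{1,0,k}\bigr]$. You avoid symbol calculus altogether: the gain of $\lambda^{-1/k}$ per localisation comes from the elementary fact that $[P,\phi]$ has order $k-1$, fed into Lemma \ref{estimates-for-u}, and the loss from the crude tail bound $\|\chi_N w\|_{H^k}\ll\|g\|+\lambda\|w\|$ is beaten by iterating along a chain of $2k$ cut-offs; your power count (chain length at least $2k$) is exactly right, and your commutator identity $[P,\phi]w=[P,\phi](\widetilde\phi w)$ is justified since the coefficients of $[P,\phi]$ live in $\mathrm{supp}\,\nabla\phi$ where $\widetilde\phi\equiv1$. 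Two remarks on the comparison. First, your argument needs one ingredient the paper does not invoke, namely qualitative interior elliptic regularity ($w\in H^k_{\mathrm{loc}}$, so that $\phi w\in H^k_0(\Omega)$ and Lemma \ref{estimates-for-u} applies) together with the $\lambda$-independent interior estimate; both are standard for constant-coefficient elliptic operators, so this is a legitimate, if external, appeal. Second, your conclusion is in fact slightly stronger than the paper's: you obtain $\|\chi w\|\ll\lambda^{-1}\bigl(\|w\|+\|g\|\bigr)$ (and, by lengthening the chain, any fixed negative power of $\lambda$ in front of $\|w\|$), whereas the one-step parametrix argument of the paper only yields $\lambda^{-1/k}\|w\|+\lambda^{-1}\|g\|$; since the stronger bound implies the stated one, this is harmless, and it reflects that the paper simply stops at a first-order remainder because nothing more is needed downstream.
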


\begin{proof}
We choose and fix cut-off functions $\chi_1,\chi_2\in C_{\mathrm c}^\infty(\Omega)$ so that $\chi_1\equiv1$ in $\mathrm{supp}\,\chi$ and $\chi_2\equiv1$ in $\mathrm{supp}\,\chi_1$. We write
\[B=\mathrm{Op}\!\left(\frac\chi{P(\cdot)+\lambda}\right),\]
so that
\[B\left(P+\lambda\right)=\chi,\]
and $B\in\mathrm{Op}\bigl[S_{1,0,k}^{-k}\bigr]$, and $\bigl\|B\bigr\|\ll\lambda^{-1}$. By pseudodifferential calculus, we have
\[B\chi_1\left(P+\lambda\right)=\chi+K,\]
where $K\in\mathrm{Op}\bigl[S_{1,0,k}^{-1}\bigr]$, and so $\bigl\|K\bigr\|\ll\lambda^{-1/k}$. In particular, we now have
\[B\chi_1\left(P+\lambda\right)\chi_2=\chi+K\chi_2.\]

Since
\[\chi_1\left(P+\lambda\right)\chi_2w=\chi_1g,\]
we now have
\[B\chi_1\left(P+\lambda\right)\chi_2w=B\chi_1g,\]
and so
\[\chi w=-K\chi_2w+B\chi_1g.\]
We immediately get the estimates
\[\bigl\|\chi w\bigr\|\ll\lambda^{-1/k}\bigl\|w\bigr\|+\lambda^{-1}\bigl\|g\bigr\|,\]
and
\[\bigl\|\partial^\alpha(\chi w)\bigr\|\ll\bigl\|w\bigr\|+\lambda^{1/k-1}\bigl\|g\bigr\|,\]
for multi-indices $\alpha$ with $\left|\alpha\right|=1$.

Next, suppose that $N\in\left\{1,2\ldots,k-1\right\}$ is such that we have the estimates
\[\bigl\|\partial^\alpha(\chi w)\bigr\|\ll\bigl\|w\bigr\|+\lambda^{\left|\alpha\right|/k-1}\bigl\|g\bigr\|\]
for all multi-indices $\alpha$ with $\left|\alpha\right|\leqslant N$, and any fixed cut-off function $\chi$. Then, for a multi-index $\alpha$ with $\left|\alpha\right|=N+1$, we shall write $\alpha=\beta+\gamma$ with multi-indices $\beta$ and $\gamma$ such that $\left|\beta\right|=N$ and $\left|\gamma\right|=1$. Arguing as above, we arrive at an identity
\[\chi w=-K\chi_2w+B\chi_1g.\]
To estimate $\bigl\|\partial^\alpha(\chi w)\bigr\|$ we first observe that
\[\bigl\|\partial^\alpha B\chi_1g\bigr\|\ll\lambda^{(N+1)/k-1}\bigl\|g\bigr\|,\]
because $\partial^\alpha B\in\mathrm{Op}\bigl[S^{N+1-k}_{1,0,k}\bigr]$. Next, we observe, that by the induction hypothesis, $\chi_2w\in H^N(\mathbb R^n)$ with
\[\bigl\|\chi_2w\bigr\|_{H^N(\mathbb R^n)}\ll\bigl\|w\bigr\|+\lambda^{N/k-1}\bigl\|g\bigr\|,\]
and since $\partial^\gamma K\in\mathrm{Op}\bigl[S^0_{1,0,k}\bigr]$, we have
\[\bigl\|\partial^\gamma K\chi_2w\bigr\|_{H^N(\mathbb R^n)}\ll\bigl\|w\bigr\|+\lambda^{N/k-1}\bigl\|g\bigr\|,\]
and since
\[\bigl\|\partial^\beta\partial^\gamma K\chi_2w\bigr\|\ll\bigl\|\partial^\gamma K(\chi_2w)\bigr\|_{H^N(\mathbb R^n)},\]
we may invoke induction to finish the proof.
\end{proof}

\begin{lemma}\label{resolvent-estimates}
Let $V\in L^\infty(\Omega)$ be such that in some neighbourhood of $\partial\Omega$ it only takes values from some closed complex half-plane not containing zero. Also, let $W_\alpha\in L^\infty_{\mathrm c}(\Omega)$ be complex-valued for each multi-index $\alpha$ with $1\leqslant\left|\alpha\right|\leqslant e$, where $e\in\left\{0,1,\ldots,k-1\right\}$, and write $W$ for the partial differential operator
\[W=\sum_{1\leqslant\left|\alpha\right|\leqslant e}W_\alpha\partial^\alpha.\]
In the case $e=0$ we set $W=0$.
Let $u\in H_0^k(\Omega)$ and $w\in H_{P}^k(\Omega)$ solve the system
\[\left\{\!\!\begin{array}{l}
\left(P+\mu W+\lambda\right)u+\lambda^{-\nu}Ww+Vw=f,\\[2pt]
\left(P+\iota W+\lambda\right)w=g,
\end{array}\right.\]
in $\Omega$ with $f,g\in L^2(\Omega)$, $\nu,\mu,\iota\in\left\{0,1\right\}$ and $\lambda\in\mathbb R_+$. Then, for $\lambda\gg1$,
\[\sum_{\left|\alpha\right|\leqslant k}\lambda^{-\left|\alpha\right|/k}\bigl\|\partial^\alpha u\bigr\|\ll\frac1\lambda\bigl\|f\bigr\|+\lambda^{\varepsilon+e/k-2}\bigl\|g\bigr\|,\]
and
\[\bigl\|w\bigr\|\ll\bigl\|f\bigr\|+\lambda^{\varepsilon+e/k-1}\bigl\|g\bigr\|.\]
Given a fixed cut-off function $\chi\in C_{\mathrm c}^\infty(\Omega)$, we have, in addition, the local estimates
\[\bigl\|\chi w\bigr\|\ll\lambda^{-1/k}\bigl\|f\bigr\|+\left(\lambda^{\varepsilon+(e-1)/k-1}+\lambda^{-1}\right)\bigl\|g\bigr\|,\]
and
\[
\bigl\|\partial^\alpha(\chi w)\bigr\|\ll\bigl\|f\bigr\|+\left(\lambda^{\varepsilon+e/k-1}+\lambda^{\left|\alpha\right|/k-1}\right)\bigl\|g\bigr\|\]
for multi-indices $\alpha$ with $1\leqslant\left|\alpha\right|\leqslant k$,
and the non-local estimate
\[\bigl\|Pw\bigr\|\ll\lambda\bigl\|f\bigr\|+\lambda^{\varepsilon+e/k}\bigl\|g\bigr\|,\]
both again for $\lambda\gg1$.
\end{lemma}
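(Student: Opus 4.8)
The plan is to establish the two principal estimates first---the weighted sum for $u$ and the bound $\|w\|\ll\|f\|+\lambda^{\varepsilon+e/k-1}\|g\|$---because these are genuinely coupled, and then to deduce the local and non-local estimates for $w$ from Lemma~\ref{estimates-for-w} by substitution. Before anything else I would fix a cut-off $\zeta\in C_{\mathrm c}^\infty(\Omega)$ which equals $1$ both on a neighbourhood of the compact set $\bigcup_\alpha\mathrm{supp}\,W_\alpha$ and on the complement in $\Omega$ of some neighbourhood $U$ of $\partial\Omega$ on which $V$ is coercive; such a $\zeta$ exists because the sets $\mathrm{supp}\,W_\alpha$ stay a fixed positive distance from $\partial\Omega$, so $U$ may be taken small enough to be disjoint from them while $\overline{\Omega\setminus U}$ is compact in $\Omega$. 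For this $\zeta$ one has $Ww=W(\zeta w)$, and since $W(\zeta w)$ is supported where $\zeta\equiv1$ also $\langle Ww,w\rangle=\langle W(\zeta w),\zeta w\rangle$; this last identity is the device that will later let the dangerous diagonal term inherit the smallness of $\|\zeta w\|$.

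First I would treat the $u$-equation. Rewriting it as $(P+\lambda)u=f-\mu Wu-\lambda^{-\nu}Ww-Vw$ and applying Lemma~\ref{estimates-for-u}, the contribution of $\mu Wu$ is $\ll\lambda^{-1}\|Wu\|\ll\lambda^{e/k-1}\sum_{|\alpha|\le k}\lambda^{-|\alpha|/k}\|\partial^\alpha u\|$, which, since $e<k$, can be absorbed into the left side for $\lambda\gg1$. Separately, applying Lemma~\ref{estimates-for-w} to $w$ (whose source $(P+\lambda)w=g-\iota Ww$ lies in $L^2$ by interior regularity) with the cut-off $\zeta$, summing over $1\le|\alpha|\le e$, and absorbing the resulting $\lambda^{e/k-1}\|Ww\|$ on the right, one gets $\|Ww\|=\|W(\zeta w)\|\ll\|w\|+\lambda^{e/k-1}\|g\|$, and likewise $\|\zeta w\|\ll\lambda^{-1/k}\|w\|+\lambda^{-1}\|g\|$. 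Feeding these and $\|Vw\|\ll\|w\|$ into the application of Lemma~\ref{estimates-for-u} yields
\[
\sum_{|\alpha|\le k}\lambda^{-|\alpha|/k}\bigl\|\partial^\alpha u\bigr\|\ll\frac1\lambda\|f\|+\frac1\lambda\|w\|+\lambda^{e/k-2}\|g\|.
\]

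The crux is the $L^2$-bound on $w$, where the coercivity of $V$ near $\partial\Omega$ is essential. Pairing the first equation with $w$ in $L^2$ and using that the adjoint of the minimal extension of $P$ is its maximal extension---so that $\langle(P+\lambda)u,w\rangle=\langle u,(P+\lambda)w\rangle=\langle u,g\rangle-\iota\langle u,Ww\rangle$---gives an identity for $\int V|w|^2$ as a combination of $\langle f,w\rangle$, $\langle u,g\rangle$, $\langle u,Ww\rangle$, $\langle Wu,w\rangle$ and $\langle Ww,w\rangle$. Multiplying by the unimodular constant that makes the real part of $V$ at least some $c>0$ near $\partial\Omega$, taking real parts, and using $\int\Re(\cdot)\,|w|^2\ge c\|w\|^2-C\|\zeta w\|^2$, I arrive at
\[
c\,\|w\|^2\le\bigl|\langle f,w\rangle\bigr|+\bigl|\langle u,g\rangle\bigr|+\bigl|\langle u,Ww\rangle\bigr|+\bigl|\langle Wu,w\rangle\bigr|+\lambda^{-\nu}\bigl|\langle Ww,w\rangle\bigr|+C\,\|\zeta w\|^2.
\]
Every term on the right is then estimated by Cauchy--Schwarz and the bounds already in hand: $\|\zeta w\|$, $\|Ww\|$, $\|Wu\|\ll\lambda^{e/k}\sum_{|\alpha|\le k}\lambda^{-|\alpha|/k}\|\partial^\alpha u\|$, and the displayed $u$-estimate. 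Since $e<k$, every resulting $\|w\|^2$ carries a coefficient tending to $0$; for the diagonal term one first writes $\langle Ww,w\rangle=\langle W(\zeta w),\zeta w\rangle$, so that $\lambda^{-\nu}|\langle Ww,w\rangle|\ll\|Ww\|\,\|\zeta w\|$ again produces only small coefficients on $\|w\|^2$. Each mixed product $\|w\|\cdot\lambda^{a}\|g\|$ with $a\le0$ is split by a weighted Young inequality $\|w\|\,\lambda^a\|g\|\le\tfrac12\lambda^{-\varepsilon}\|w\|^2+\tfrac12\lambda^{2a+\varepsilon}\|g\|^2$, and this is precisely where the $\lambda^{\varepsilon}$ of the statement originates. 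Absorbing the $\|w\|^2$ terms for $\lambda\gg1$ gives $\|w\|\ll\|f\|+\lambda^{\varepsilon+e/k-1}\|g\|$, and reinserting this into the displayed $u$-estimate turns it into the asserted one. Finally, the local estimates for $\|\chi w\|$ and $\|\partial^\alpha(\chi w)\|$ follow by feeding the bounds for $\|w\|$ and $\|Ww\|$ into Lemma~\ref{estimates-for-w} applied to $w$ with source $g-\iota Ww$, and the non-local estimate is immediate from $Pw=g-\iota Ww-\lambda w$ together with those same bounds. I expect the real difficulty to be the control of the $W$-terms near $\partial\Omega$, where $w$ has neither interior elliptic regularity nor the coercivity of $V$ available; this is resolved by the fact that $W$ is supported in the interior, so that $Ww=W(\zeta w)$ and the diagonal pairing localises on both sides, after which only the bookkeeping of powers of $\lambda$ remains---and that is handled by the strict inequality $e<k$ together with the weighted Young inequality.
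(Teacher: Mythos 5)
Your proof is correct and takes essentially the same route as the paper: the same pairing identity (using that the maximal extension of $P$ is the adjoint of the minimal one), the coercivity of $V$ outside a cut-off region, the localization $Ww=W(\zeta w)$ combined with the $\lambda^{-1/k}$ gain from Lemma \ref{estimates-for-w}, weighted Young inequalities producing the $\lambda^{\varepsilon}$, and absorption for $\lambda\gg1$, with the local and non-local estimates then read off from Lemma \ref{estimates-for-w} and the equation for $w$. The only (harmless) organizational difference is that you carry the $\mu Wu$ and $\iota Ww$ terms along and absorb them directly, whereas the paper first proves the case $\mu=\iota=0$ and then bootstraps to $\mu=1$ and $\iota=1$ by replacing $f$ by $f-Wu$ and $g$ by $g-Ww$.
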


\begin{proof}
We consider first the case $\mu=\iota=0$.
Multiplying the first equation by $\overline w$, taking complex conjugates of the second equation and multiplying by $u$, and integrating over $\Omega$ and subtracting, we get
\[\int V\bigl|w\bigr|^2=\int\overline wf-\int u\overline g-\lambda^{-\nu}\int\overline wWw,\]
where we used the fact that $\int\overline wPu=\int\overline{Pw}u$.
We pick a cut-off function $\chi\in C_{\mathrm c}^\infty(\Omega)$ so that $V$ only takes values from some closed complex half-plane not containing zero in $\mathrm{supp}\left(1-\chi\right)$, and such that $\chi\equiv1$ in all the supports of all the coefficients of $W$. Now we may estimate using the Cauchy--Schwarz inequality and Lemma \ref{estimates-for-u}, for arbitrarily small $\kappa\in\mathbb R_+$,
\begin{align*}
&\bigl\|\left(1-\chi\right)w\bigr\|^2\ll\int V\bigl|w\bigr|^2
\ll\bigl\|w\bigr\|\cdot\bigl\|f\bigr\|+\bigl\|g\bigr\|\cdot\bigl\|u\bigr\|+\bigl\|\chi w\bigr\|\cdot\lambda^{-\nu}\bigl\|Ww\bigr\|\\
&\ll\frac1\kappa\bigl\|f\bigr\|^2+\kappa\bigl\|w\bigr\|^2+\bigl\|g\bigr\|\left(\frac1\lambda\bigl\|f\bigr\|+\frac1\lambda\bigl\|w\bigr\|+\frac1\lambda\bigl\|Ww\bigr\|\right)+\lambda^{\varepsilon}\bigl\|\chi w\bigr\|^2+\frac{\bigl\|Ww\bigr\|^2}{\lambda^{2\nu+\varepsilon}}\\
&\ll\frac1\kappa\bigl\|f\bigr\|^2+\kappa\bigl\|\left(1-\chi\right)w\bigr\|^2+\lambda^{\varepsilon}\bigl\|\chi w\bigr\|^2+\lambda^{\varepsilon-2}\bigl\|g\bigr\|^2+\lambda^{-\varepsilon}\bigl\|Ww\bigr\|^2.
\end{align*}
The norms involving $\chi w$ and $Ww$ can be estimated by Lemma \ref{estimates-for-w} as
 \[\lambda^\varepsilon\bigl\|\chi w\bigr\|+\lambda^{-\varepsilon}\bigl\|Ww\bigr\|
\ll\lambda^{-\varepsilon}\bigl\|w\bigr\|+\lambda^{e/k-1}\bigl\|g\bigr\|.\]
Thus, for $\lambda\gg1$ and sufficiently small fixed $\kappa$, the norm of $\left(1-\chi\right)w$ on the right-hand side can be absorbed to the left-hand side, and we may continue by Lemma \ref{estimates-for-w}
\begin{align*}
\bigl\|w\bigr\|&\ll
\bigl\|\left(1-\chi\right)w\bigr\|+\bigl\|\chi w\bigr\|\ll\bigl\|f\bigr\|+\lambda^{\varepsilon-1}\bigl\|g\bigr\|+\lambda^\varepsilon\bigl\|\chi w\bigr\|+\lambda^{-\varepsilon}\bigl\|Ww\bigr\|\\
&\ll\bigl\|f\bigr\|+\lambda^{\varepsilon-1}\bigl\|g\bigr\|+\lambda^{-\varepsilon}\bigl\|w\bigr\|+\lambda^{e/k-1}\bigl\|g\bigr\|.
\end{align*}
For $\lambda\gg1$, the norm of $w$ can be absorbed to the left-hand side giving
\[\bigl\|w\bigr\|\ll\bigl\|f\bigr\|+\lambda^{\varepsilon+e/k-1}\bigl\|g\bigr\|.\]
Combining the above estimates with Lemmas \ref{estimates-for-u} and \ref{estimates-for-w} gives
\[\sum_{\left|\alpha\right|\leqslant k}\lambda^{-\left|\alpha\right|/k}\bigl\|\partial^\alpha u\bigr\|\ll\frac1\lambda\bigl\|f\bigr\|+\frac1\lambda\bigl\|Vw\bigr\|+\frac1\lambda\bigl\|Ww\bigr\|
\ll\frac1\lambda\bigl\|f\bigr\|+\lambda^{\varepsilon+e/k-2}\bigl\|g\bigr\|,\]
as desired.

The estimates for $\chi w$ now follow from combining the above estimates with Lemma \ref{estimates-for-w}, and the estimate for $Pw$ follows immediately from the equation satisfied by $w$ and the previous estimates for $w$. The case $\mu=1$, $\nu=0$ follows immediately from the case $\mu=\nu=0$ with $f$ replaced by $f-Wu$, as the case $\mu=0$ then implies for the case $\mu=1$ that
\[\bigl\|Wu\bigr\|\ll\lambda^{e/k-1}\bigl\|f\bigr\|+\lambda^{e/k-1}\bigl\|Wu\bigr\|+\lambda^{\varepsilon+2e/k-2}\bigl\|g\bigr\|,\]
where the term $\left\|Wu\right\|$ on the right-hand side can be absorbed to the left-hand side for sufficiently large $\lambda$, and now this estimate for $Wu$ can be used to get the required estimates from those given by the case $\mu=0$.
Finally, whether $\mu=0$ or $\mu=1$, the case $\iota=1$ follows from the case $\iota=0$ in the same fashion, except now we replace $g$ by $g-Ww$, and the case $\iota=0$ gives the estimate
\[\bigl\|Ww\bigr\|\ll\bigl\|f\bigr\|+\lambda^{\varepsilon+e/k-1}\bigl\|g\bigr\|,\]
which can be used to transform the estimates from the case $\iota=0$ to those required in the case $\iota=1$.
\end{proof}

\subsection{Resolvent operators of the Born approximation}\label{roba}

The following is a special case of Banach's closed range theorem (see e.g. Sect. VII.5 in \cite{Yosida}).
\begin{theorem}\label{banach-closed-range-theorem}
Let $H$ be a Hilbert space, and let $T$ be a closed densely defined operator of $H$. Then the following four statements are equivalent:
\begin{enumerate}\setlength{\itemsep}{0pt}
\item $\mathrm{Im}\,T$ is closed in $H$.
\item $\mathrm{Im}\,T^\ast$ is closed in $H$.
\item $\mathrm{Im}\,T=(\mathrm{Ker}\,T^\ast)^\perp$.
\item $\mathrm{Im}\,T^\ast=(\mathrm{Ker}\,T)^\perp$.
\end{enumerate}
In particular, to show that $T$ is surjective, it is enough to show that $\mathrm{Im}\,T$ is closed and that $T^\ast$ is injective.
\end{theorem}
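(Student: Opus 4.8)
The plan is to treat this as the Hilbert space incarnation of Banach's closed range theorem, separating the four equivalences into a ``soft'' part, which is mere definition-chasing with adjoints, and the one genuinely analytic step, which uses the open mapping theorem together with $T^{\ast\ast}=T$ (valid since $T$ is closed and densely defined).

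First I would record the two identities
\[\overline{\mathrm{Im}\,T}=(\mathrm{Ker}\,T^\ast)^\perp,\qquad\overline{\mathrm{Im}\,T^\ast}=(\mathrm{Ker}\,T)^\perp,\]
valid for every closed densely defined $T$. For the first, $z\perp\mathrm{Im}\,T$ means $\langle Tx,z\rangle=0$ for all $x\in D(T)$, which by the very definition of the adjoint says exactly that $z\in D(T^\ast)$ and $T^\ast z=0$; hence $(\mathrm{Im}\,T)^\perp=\mathrm{Ker}\,T^\ast$, and taking orthogonal complements gives the claim. The second identity follows by applying the first to $T^\ast$ and using $T^{\ast\ast}=T$. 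From these, (1)$\Leftrightarrow$(3) and (2)$\Leftrightarrow$(4) are immediate: $\mathrm{Im}\,T$ is closed precisely when it equals its closure $(\mathrm{Ker}\,T^\ast)^\perp$, and likewise for $T^\ast$.

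The core is therefore (1)$\Leftrightarrow$(2). Assume $R:=\mathrm{Im}\,T$ is closed. Equip $D(T)$ with the graph norm; since $T$ is closed this is a Banach space, and $T\colon D(T)\to R$ is a bounded surjection onto the Banach space $R$, hence open by the open mapping theorem. This yields a constant $C$ so that every $y\in R$ has a preimage $x\in D(T)$ with $\|x\|\leqslant C\|y\|$. Now fix $f\in(\mathrm{Ker}\,T)^\perp$ and define $\phi\colon R\to\mathbb C$ by $\phi(Tx)=\langle x,f\rangle$; this is well defined because any two preimages of a given $y$ differ by an element of $\mathrm{Ker}\,T$, on which $f$ vanishes, and it is bounded with $|\phi(y)|\leqslant C\|f\|\,\|y\|$ by the preimage estimate. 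By the Riesz representation theorem on the Hilbert space $R$ there is $g\in R$ with $\phi(y)=\langle y,g\rangle$ for $y\in R$, i.e. $\langle Tx,g\rangle=\langle x,f\rangle$ for all $x\in D(T)$, which is precisely the statement that $g\in D(T^\ast)$ and $T^\ast g=f$. Thus $(\mathrm{Ker}\,T)^\perp\subseteq\mathrm{Im}\,T^\ast\subseteq\overline{\mathrm{Im}\,T^\ast}=(\mathrm{Ker}\,T)^\perp$, so $\mathrm{Im}\,T^\ast$ is closed, proving (1)$\Rightarrow$(2); the reverse implication follows by applying this to $T^\ast$ and invoking $T^{\ast\ast}=T$.

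Finally, the ``in particular'' clause drops out of (1)$\Leftrightarrow$(3): if $\mathrm{Im}\,T$ is closed then $\mathrm{Im}\,T=(\mathrm{Ker}\,T^\ast)^\perp$, and if moreover $T^\ast$ is injective this set is $\{0\}^\perp=H$, so $T$ is surjective. The only real obstacle is the implication (1)$\Rightarrow$(2): it is the one place where completeness enters essentially, via the open mapping theorem applied to $T$ in the graph norm, and where one must be careful about the domains of the unbounded operators $T$ and $T^\ast$; everything else is a formal consequence of the adjoint relations and elementary Hilbert space geometry.
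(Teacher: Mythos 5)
Your argument is correct, but note that the paper does not actually prove this statement: it simply quotes it as a special case of Banach's closed range theorem with a pointer to Sect.\ VII.5 of Yosida's \emph{Functional Analysis}. So what you have produced is a self-contained Hilbert-space replacement for that citation, and it is a genuinely different (more elementary) route than the general Banach-space duality proof the reference relies on. Concretely, you exploit the Hilbert structure so that the ``soft'' equivalences (1)$\Leftrightarrow$(3) and (2)$\Leftrightarrow$(4) reduce to the identity $(\mathrm{Im}\,T)^\perp=\mathrm{Ker}\,T^\ast$ together with double orthogonal complements, while the analytic core (1)$\Rightarrow$(2) --- in fact you prove the stronger (1)$\Rightarrow$(4), namely $\mathrm{Im}\,T^\ast=(\mathrm{Ker}\,T)^\perp$ --- comes from the open mapping theorem applied to $T$ on its graph-norm domain plus the Riesz representation theorem on the closed subspace $\mathrm{Im}\,T$; in the Banach-space setting one would instead juggle annihilators in dual spaces and the Hahn--Banach theorem. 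Your repeated use of $T^{\ast\ast}=T$ is legitimate because for a closed densely defined operator on a Hilbert space the adjoint is automatically densely defined and closed; this is the one standard fact you invoke without proof, and it is exactly what licenses applying the soft identity and the core implication to $T^\ast$. The citation buys brevity; your proof buys self-containedness and delivers directly the form of the conclusion the paper actually uses in the ``in particular'' clause (closed range plus injective adjoint implies surjectivity).
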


Following \cite{Sylvester}, we first consider the Born approximation, the operator matrix
\[B=\begin{bmatrix}
P+\nu W&\lambda^{-\nu}W+V\\
0&P
\end{bmatrix}\colon H_0^k(\Omega)\times H^k_{P}(\Omega)\longrightarrow L^2(\Omega)\times L^2(\Omega).\]
More precisely, we will consider $B$ at large negative energies. The relevant properties will be as follows:
\begin{proposition}\label{resolvent-of-the-born-approximation}
Let $P$ be an elliptic formally self-adjoint constant coefficient partial differential operator of order $k\in\mathbb Z_+$ in $\mathbb R^n$ bounded from below, let $\Omega$ be an open nonempty bounded subset of $\mathbb R^n$, and let $V\in L^\infty(\Omega)$ be such that in some neighbourhood of $\partial\Omega$ it only takes values from some closed complex half-plane not containing zero. Also, let $W_\alpha\in W^{\left|\alpha\right|,\infty}_{\mathrm c}(\Omega)$ for each multi-index $\alpha$ with $1\leqslant\left|\alpha\right|\leqslant e$, where $e\in\left\{0,1,\ldots,k-1\right\}$, and write $W$ for the partial differential operator
\[W=\sum_{1\leqslant\left|\alpha\right|\leqslant e}W_\alpha\partial^\alpha.\]
In the case $e=0$ we set $W=0$.

Now, let $\nu,\mu\in\left\{0,1\right\}$ and let $B$ denote the operator
\[\begin{bmatrix}P+\mu W&\lambda^{-\nu}W+V\\0&P\end{bmatrix}\colon
H^k_0(\Omega)\times H^k_P(\Omega)\longrightarrow L^2(\Omega)\times L^2(\Omega).\]
Then, for sufficiently large $\lambda\in\mathbb R_+$, the operator $B+\lambda$ is bijective, and we may consider the resolvent operator matrix
\[(B+\lambda)^{-1}=\begin{bmatrix}
R_{11}&R_{12}\\
R_{21}&R_{22}
\end{bmatrix}\colon L^2(\Omega)\times L^2(\Omega)\longrightarrow L^2(\Omega)\times L^2(\Omega).\]
Here the operators $R_{11}$, $R_{12}$ and $R_{22}$ are compact, $R_{21}$ is bounded, and $\chi R_{21}$ is compact for any $\chi\in C_{\mathrm c}^\infty(\Omega)$. The operators $\partial^\alpha R_{11}$ and $\partial^\alpha R_{12}$ are compact for multi-indices $\alpha$ with $1\leqslant\left|\alpha\right|<k$, as are $\partial^\alpha\chi R_{21}$ and $\partial^\alpha\chi R_{22}$. Furthermore, we have the operator norm bounds
\[\bigl\|\partial^\alpha R_{11}\bigr\|\ll\lambda^{\left|\alpha\right|/k-1},\quad
\bigl\|\partial^\alpha R_{12}\bigr\|\ll\lambda^{\varepsilon+\left|\alpha\right|/k-2},\quad\text{and}\quad
\bigl\|R_{22}\bigr\|\ll\lambda^{\varepsilon+e/k-1},\]
and
\[\bigl\|R_{21}\bigr\|\ll1,\quad\text{and}\quad\bigl\|\chi R_{21}\bigr\|\ll\lambda^{-1/k},\]
where $\chi\in C_{\mathrm c}^\infty(\Omega)$ is fixed, and $\alpha$ is a multi-index with $\left|\alpha\right|\leqslant k$.
\end{proposition}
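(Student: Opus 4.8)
The plan is to establish bijectivity of $B+\lambda$ for large $\lambda$ by a combination of the a priori estimates of Lemma \ref{resolvent-estimates} and Banach's closed range theorem (Theorem \ref{banach-closed-range-theorem}), and then to read off all the operator norm bounds and compactness statements directly from those same estimates. First I would observe that Lemma \ref{resolvent-estimates}, applied with $\iota=0$ and the appropriate $\mu,\nu$, says precisely that if $(u,w)\in H^k_0(\Omega)\times H^k_P(\Omega)$ solves $(B+\lambda)(u,w)=(f,g)$, then $(u,w)$ is controlled by $(f,g)$ in the stated norms; in particular for $\lambda\gg1$ the only solution of $(B+\lambda)(u,w)=0$ is $(u,w)=0$, so $B+\lambda$ is injective. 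The same estimates show that $B+\lambda$ has closed image: if $(B+\lambda)(u_j,w_j)\to(f,g)$ in $L^2\times L^2$, then by Lemma \ref{resolvent-estimates} (applied to differences) $(u_j,w_j)$ is Cauchy in $H^k_0(\Omega)\times H^k_P(\Omega)$, hence converges to some $(u,w)$ with $(B+\lambda)(u,w)=(f,g)$.

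To get surjectivity, by the last sentence of Theorem \ref{banach-closed-range-theorem} it suffices to show $(B+\lambda)^\ast$ is injective. The adjoint is again an operator matrix of the same shape: since the maximal and minimal extensions of $P$ are mutually adjoint, $(B+\lambda)^\ast$ acts like $\begin{bmatrix}P+\mu W^\ast+\lambda & 0\\ \lambda^{-\nu}W^\ast+\overline V & P+\lambda\end{bmatrix}$ on $H^k_P(\Omega)\times H^k_0(\Omega)$ — here the remark in the excerpt about $W^\ast$ having $L^\infty$-coefficients (which uses $W_\alpha\in W^{|\alpha|,\infty}_{\mathrm c}$) is what keeps us inside the scope of Lemma \ref{resolvent-estimates}. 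This is a lower-triangular system of the same type: given $(\phi,\psi)$ with $(B+\lambda)^\ast(\phi,\psi)=0$, the first row gives $(P+\mu W^\ast+\lambda)\phi=0$ with $\phi\in H^k_P(\Omega)$, and the second row then reads $(P+\lambda)\psi=-(\lambda^{-\nu}W^\ast+\overline V)\phi$ with $\psi\in H^k_0(\Omega)$. Swapping the roles of $u$ and $w$ and of the two equations, this is exactly covered by another application of Lemma \ref{resolvent-estimates} (with $\overline V$ in place of $V$, which still satisfies the half-plane condition near $\partial\Omega$, and $W^\ast$ in place of $W$), so for $\lambda\gg1$ we get $\phi=\psi=0$. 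Hence $B+\lambda$ is bijective and $(B+\lambda)^{-1}$ is bounded by the closed graph theorem.

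With bijectivity in hand, the block entries $R_{ij}$ are defined by: given $(f,g)$, let $(u,w)=(B+\lambda)^{-1}(f,g)$, so $R_{11}f+R_{12}g=u$ and $R_{21}f+R_{22}g=w$; setting one of $f,g$ to zero isolates each block. The operator norm bounds are then immediate translations of Lemma \ref{resolvent-estimates}: $\|\partial^\alpha R_{11}\|\ll\lambda^{|\alpha|/k-1}$ and $\|\partial^\alpha R_{12}\|\ll\lambda^{\varepsilon+|\alpha|/k-2}$ come from the estimate for $\sum_{|\alpha|\leqslant k}\lambda^{-|\alpha|/k}\|\partial^\alpha u\|$ with $g=0$ and $f=0$ respectively; $\|R_{22}\|\ll\lambda^{\varepsilon+e/k-1}$ and $\|R_{21}\|\ll1$ from the estimate for $\|w\|$; $\|\chi R_{21}\|\ll\lambda^{-1/k}$ and the bounds on $\partial^\alpha\chi R_{2\cdot}$ from the local estimates for $\chi w$ and $\partial^\alpha(\chi w)$. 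Compactness of $R_{11}$, $R_{12}$ and of $\partial^\alpha R_{11}$, $\partial^\alpha R_{12}$ for $|\alpha|<k$, as well as of $\chi R_{21}$, $\partial^\alpha\chi R_{21}$, $R_{22}$, $\partial^\alpha\chi R_{22}$, follows from the Rellich--Kondrachov theorem: in each case the relevant estimate shows the operator maps $L^2\times L^2$ boundedly into a space $H^s(\Omega')$ with $s>0$ on a fixed compactly contained open set $\Omega'$ (or into $H^k_0(\Omega)$ for the $R_{1\cdot}$ blocks, whose image functions are supported in $\overline\Omega$ with an extra derivative), and the inclusion of that space into $L^2(\Omega)$ is compact; note $R_{21}$ itself is only asserted bounded because $w$ need not gain regularity globally up to $\partial\Omega$, only locally.

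I expect the main obstacle to be the careful bookkeeping in the adjoint argument: verifying that $(B+\lambda)^\ast$ really is the lower-triangular matrix claimed, with domain $H^k_P(\Omega)\times H^k_0(\Omega)$, requires using the duality between minimal and maximal extensions of $P$ together with the fact that $\mu W$ and $\lambda^{-\nu}W+V$ are bounded perturbations of the right mapping properties (and that $W^*$ has the $L^\infty$-coefficient structure needed for Lemma \ref{resolvent-estimates} to apply). Once the adjoint is correctly identified, everything else is a direct, if slightly lengthy, transcription of Lemma \ref{resolvent-estimates} combined with standard compact-embedding arguments.
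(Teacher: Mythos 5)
Most of your argument coincides with the paper's proof: injectivity and closedness of the range from Lemma \ref{resolvent-estimates}, surjectivity via injectivity of the lower-triangular adjoint (this is exactly why the lemma carries the extra parameters, and the paper applies it with $\mu=0$, $\iota=1$, noting that the zeroeth-order part of $W^\ast$ can be absorbed into $\overline V$ near $\partial\Omega$ because the $W_\alpha$ vanish there), and then the norm bounds and the compactness of $R_{11}$, $R_{12}$, $\partial^\alpha R_{11}$, $\partial^\alpha R_{12}$, $\chi R_{21}$, $\chi R_{22}$ and their derivatives by mapping into $H^{k-|\alpha|}_0(\Omega)$ or $H^1_0(\Omega)$ and using the compact embedding.

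However, there is a genuine gap at the one point where the proposition is not a direct transcription of the estimates: the compactness of $R_{22}$ itself. Your Rellich--Kondrachov argument does not apply to it, and you in fact acknowledge the obstruction when you explain why $R_{21}$ is only bounded: the output $w=R_{22}g$ lies merely in $H^k_P(\Omega)$, and the a priori estimates give regularity only for $\chi w$ with $\chi\in C^\infty_{\mathrm c}(\Omega)$. Controlling $w$ in $H^s$ on a compactly contained set says nothing about the $L^2$ mass of $w$ in the boundary layer $\Omega\setminus\mathrm{supp}\,\chi$, and $H^k_P(\Omega)$ is not compactly embedded in $L^2(\Omega)$ (for $P=-\Delta$ it contains the infinite-dimensional space of $L^2$ harmonic functions). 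So your argument only yields compactness of $\chi R_{22}$, not of $R_{22}$, and the latter is indispensable for the upper-triangular-compact resolvent theorem used later. The paper closes this gap with a separate argument that uses the coercivity of $V$ near $\partial\Omega$: take $g_n\rightharpoonup 0$ weakly, set $w_n=R_{22}g_n$, note that $\chi w_n\to0$ by the local compactness, and then pair the two equations to obtain
\begin{equation*}
\int V\left|w_n\right|^2=-\int u_n\overline{g_n}-\lambda^{-\nu}\int\overline{w_n}\,Ww_n-\mu\int\overline{w_n}\,Wu_n,
\end{equation*}
from which, after inserting cut-offs equal to $1$ on the supports of the $W_\alpha$ and using the half-plane condition on $V$ in the boundary region together with the bounds of Lemma \ref{resolvent-estimates}, one deduces $\int_{\Omega\setminus K}\left|w_n\right|^2\to0$, hence $\left\|w_n\right\|\to0$ and $R_{22}$ is compact. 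Without some argument of this kind (the coercivity hypothesis on $V$ is exactly what makes it work), the proposition's key compactness claim is unproved.
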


\begin{proof}
Lemma \ref{resolvent-estimates} immediately implies that $B+\lambda$ is injective for sufficiently large real $\lambda$. The image of $B+\lambda$ turns out to be closed. Namely, let $\left\langle f_n,g_n\right\rangle_{n=1}^\infty\subseteq\mathrm{Im}\,(B+\lambda)$ satisfying $f_n\longrightarrow f$ and $g_n\longrightarrow g$ for some $f,g\in L^2(\Omega)$ when $n\longrightarrow\infty$, and let $\left\langle u_n,w_n\right\rangle_{n=1}^\infty\subseteq H^k_0(\Omega)\times H^k_P(\Omega)$ be such that $(B+\lambda)\left\langle u_n,w_n\right\rangle=\left\langle f_n,g_n\right\rangle$ for each $n\in\mathbb Z_+$.
Since the sequence $\left\langle f_n,g_n\right\rangle_{n=1}^\infty$ is Cauchy in $L^2(\Omega)\times L^2(\Omega)$, Lemma \ref{resolvent-estimates} implies that the sequence $\left\langle u_n,w_n\right\rangle_{n=1}^\infty$ is Cauchy in $H^k_0(\Omega)\times H^k_P(\Omega)$ and therefore has a unique limit $\left\langle u,w\right\rangle\in H^k_0(\Omega)\times H^k_P(\Omega)$, which by Lemma \ref{resolvent-estimates} must satisfy $(B+\lambda)\left\langle u,w\right\rangle=\left\langle f,g\right\rangle$. Thus $\mathrm{Im}\,(B+\lambda)$ is closed.

Since the adjoint of $B$ is
\[B^\ast=\begin{bmatrix}P+\mu W^\ast&0\\\lambda^{-\nu}W^\ast+\overline V&P\end{bmatrix}\colon H^k_P(\Omega)\times H^k_0(\Omega)\longrightarrow L^2(\Omega)\times L^2(\Omega),\]
and the zeroeth-order term of $\lambda^{-\nu}W^\ast+\overline V$ satisfies the coercivity condition as all the coefficients of $W$ vanish in a neighbourhood of $\partial\Omega$,
Lemma \ref{resolvent-estimates} also implies that $B^\ast+\lambda$ is injective, using the values $\mu=0$ and $\iota=1$, and so, by Theorem \ref{banach-closed-range-theorem}, the operator $B+\lambda$ is surjective. We have now established that for $\lambda\gg1$, the operator $B+\lambda$ is bijective and we may move on to discuss $(B+\lambda)^{-1}$.

The operator $R_{11}$ is the projection to the first component of the restriction of $(B+\lambda)^{-1}$ to the subspace $L^2(\Omega)\times\left\{0\right\}$. In other words, given $f\in L^2(\Omega)$, there exists unique functions $u\in H^k_0(\Omega)$ and $w\in H^k_P(\Omega)$ solving
\[\left\{\!\!\begin{array}{l}
(P+\mu W+\lambda)u+\lambda^{-\nu}Ww+Vw=f,\\
(P+\lambda)w=0,\end{array}\right.\]
and the operator $R_{11}$ is the mapping $f\longmapsto u\colon L^2(\Omega)\longrightarrow L^2(\Omega)$ taking values in $H^k_0(\Omega)$. By Lemma \ref{resolvent-estimates}, we have $\bigl\|\partial^\alpha R_{11}\bigr\|\ll\lambda^{\left|\alpha\right|/k-1}$ for multi-indices $\alpha$ with $\left|\alpha\right|\leqslant k$. Similar arguments combined with Lemma \ref{resolvent-estimates} also give the operator norm bounds
\[\bigl\|\partial^\alpha R_{12}\bigr\|\ll\lambda^{\varepsilon+\left|\alpha\right|/k-2},\quad
\bigl\|R_{21}\bigr\|\ll1,\quad\text{and}\quad
\bigl\|R_{22}\bigr\|\ll\lambda^{\varepsilon+e/k-1},\]
again for $\left|\alpha\right|\leqslant k$.
Similarly, using Lemma \ref{resolvent-estimates} and a similar argument, we get the operator norm bound $\bigl\|\chi R_{21}\bigr\|\ll\lambda^{-1/k}$ for any $\chi\in C_{\mathrm c}^\infty(\Omega)$.

The operators $R_{11}$ and $R_{12}$ are compact, since by Lemmas \ref{estimates-for-u} and \ref{resolvent-estimates}, they map $L^2(\Omega)$ boundedly into $H^k_0(\Omega)$ and the latter embeds compactly into $L^2(\Omega)$. Similarly, Lemma \ref{resolvent-estimates} implies that for $\chi\in C_{\mathrm c}^\infty(\Omega)$, the operators $\chi R_{21}$ and $\chi R_{22}$ map $L^2(\Omega)$ boundedly into $H^1_0(\Omega)$ which in turn embeds compactly into $L^2(\Omega)$. Also, it is clear that $\partial^\alpha R_{11}$ and $\partial^\alpha R_{12}$, which map $L^2(\Omega)$ to $H^{k-\left|\alpha\right|}_0(\Omega)$ boundedly, are compact operators of $L^2(\Omega)$ for $1\leqslant\left|\alpha\right|<k$. We get the same claim for $\partial^\alpha\chi R_{21}$ and $\partial^\alpha\chi R_{22}$ as $\chi R_{21}$ and $\chi R_{22}$ map $L^2(\Omega)$ boundedly to $H^{k-\left|\alpha\right|}_0(\Omega)$.

Finally, it only remains to prove that $R_{22}$ is compact as well. Let $\left\langle g_n\right\rangle_{n=1}^\infty\subseteq L^2(\Omega)$ be a sequence converging weakly to zero, and let, for each $n\in\mathbb Z_+$, the functions
$u_n\in H^k_0(\Omega)$ and $w_n\in H^k_P(\Omega)$ be the unique solution to
\[\left\{\!\!\begin{array}{l}
(P+\mu W+\lambda)u_n+\lambda^{-\nu}Ww_n+Vw_n=0,\\
(P+\lambda)w_n=g_n,\end{array}\right.\]
so that $R_{22}g_n=w_n$. If we can show that $w_n\longrightarrow0$ in $L^2(\Omega)$ as $n\longrightarrow\infty$, then $R_{22}$ must be compact.

Let $K\subseteq\Omega$ be compact, and let $\chi_K$ be its characteristic function. Let us assume that $K$ is so large that $V$ satisfies the coercivity condition in $\Omega\setminus K$ and such that $K$ contains the supports of $W_\alpha$. Also, we assume as we may that $K$ contains an open set which contains a compact set $K'$ such that $V$ satisfies the coercivity condition in $\Omega\setminus K'$ as well. First, pick a cut-off function $\chi_1\in C_{\mathrm c}^\infty(\Omega)$ such that $\chi_1\equiv1$ on $K$. Since $\chi_1R_{22}$ is compact, we know that $\chi_1w_n\longrightarrow0$ as $n\longrightarrow\infty$, so that also $\chi_Kw_n$ tends to zero, and it only remains to prove that $(1-\chi_K)w_n$ tends to zero.

Next, let $\chi_2\in C_{\mathrm c}^\infty(\Omega)$ be such that $\mathrm{supp}\,\chi_2\subseteq K$ and $\chi_2\equiv1$ in $K'$, so that $V$ satisfies the coercivity condition in $\mathrm{supp}\,(1-\chi_2^2)$. The system solved by $u_n$ and $w_n$ implies easily that
\[\int V\left|w_n\right|^2=-\int u_n\overline g_n-\lambda^{-\nu}\int\overline{w_n}Ww_n-\mu\int\overline{w_n}\,Wu_n,\]
so that
\begin{align*}
\int_{\Omega\setminus K}\left|w_n\right|^2&\ll\int V(1-\chi_2^2)\left|w_n\right|^2\\
&=\int V\left|\chi_2w_n\right|^2-\int u_n\overline g_n
-\int\overline{\chi_1w_n}\,Ww_n-\mu\int\overline{\chi_1w_n}Wu_n.
\end{align*}
The first term on the right-hand side tends to zero as $n\longrightarrow\infty$ since $\chi_2R_{22}$ is compact, and the second term tends to zero since $R_{12}$ is compact and the sequence $\left\langle g_n\right\rangle_{n=1}^\infty$ is bounded. In the third term, $\chi_1w_n$ tends to zero as $\chi_1R_{22}$ is compact, and $Ww_n$ is bounded, by Lemma \ref{resolvent-estimates} and the fact that the sequence $\left\langle g_n\right\rangle_{n=1}^\infty$ is bounded. The fourth term, if it exists, tends to zero as $\chi_1R_{22}$ is compact and $\left\|Wu_n\right\|\ll\lambda^{\varepsilon+e/k-2}\left\|g_n\right\|$.
\end{proof}

\subsection{Proofs
of Theorems
\ref{schrodinger-main-theorem} and \ref{helmholtz-main-theorem}
}

The following theorem is a special case of Theorem 3.5 in \cite{Sylvester}.
\begin{theorem}\label{UTC-resolvent-theorem}
Let $H$ be a Hilbert space, and let $B$ be a closed densely defined operator on $H\times H$ with domain $\mathrm{Dom}\,B$. Let us assume that there exists a complex number $\lambda$ such that the operator
\[B-\lambda\colon\mathrm{Dom}\,B\longrightarrow H\times H\]
is bijective, and that the inverse
\[(B-\lambda)^{-1}=\begin{bmatrix}R_{11}&R_{12}\\R_{21}&R_{22}\end{bmatrix}\colon H\times H\longrightarrow H\times H\]
is bounded, and that the components $R_{11}$, $R_{12}$ and $R_{22}$ are compact. Then the spectrum of $B$ consists of a discrete set of eigenvalues with finite-dimensional generalized eigenspaces.
\end{theorem}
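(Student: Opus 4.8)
The plan is to reduce the statement to the classical analytic Fredholm theorem by exploiting the fact that $(B-\lambda)^{-1}$, although not compact, differs from a compact operator by one whose square vanishes. Write
\[
(B-\lambda)^{-1}=K+N,\qquad
K=\begin{bmatrix}R_{11}&R_{12}\\0&R_{22}\end{bmatrix},\qquad
N=\begin{bmatrix}0&0\\R_{21}&0\end{bmatrix},
\]
so that $K$ is compact on $H\times H$ and $N^2=0$. For $\mu\in\mathbb C$ and $z:=\mu-\lambda$ one has the standard identity $B-\mu=(B-\lambda)\,(I-z(B-\lambda)^{-1})$ on $\mathrm{Dom}\,B$; since $B-\lambda$ is already invertible, $\mu$ lies in the resolvent set of $B$ exactly when the bounded operator $I-z(B-\lambda)^{-1}$ is invertible on $H\times H$, in which case $(B-\mu)^{-1}=(B-\lambda)^{-1}(I-z(B-\lambda)^{-1})^{-1}$.

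The next step is to factor $I-z(B-\lambda)^{-1}$ through its lower-triangular part. Since $N^2=0$, the operator $I-zN$ is invertible with inverse $I+zN$, whence
\[
I-z(B-\lambda)^{-1}=(I-zN)-zK=(I-zN)\,(I-(I+zN)\,zK).
\]
Here $z\mapsto(I+zN)\,zK=zK+z^2NK$ is an entire operator-valued function all of whose values are compact, so $z\mapsto I-(I+zN)\,zK$ is an analytic family of operators of the form ``identity minus compact'' which equals the identity at $z=0$. By the analytic Fredholm theorem, its inverse extends to a meromorphic operator-valued function on all of $\mathbb C$ whose poles form a discrete set and at each of which the negative part of the Laurent expansion consists of finite-rank operators. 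Multiplying on the right by the entire family $I+zN$ and the fixed bounded operator $(B-\lambda)^{-1}$, we conclude that $\mu\mapsto(B-\mu)^{-1}$ is meromorphic on $\mathbb C$, with a discrete set of poles and finite-rank principal parts.

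It remains to read off the conclusions. The poles of $\mu\mapsto(B-\mu)^{-1}$ are precisely the points where $B-\mu$ fails to be invertible, i.e.\ the spectrum of $B$, which is therefore discrete. At such a point $\mu_0$ the operator $I-(\mu_0-\lambda)(B-\lambda)^{-1}$ equals $(I-(\mu_0-\lambda)N)$ composed with an operator of the form ``identity minus compact'', hence is Fredholm of index zero; not being invertible, it has a non-trivial kernel, and applying $(B-\lambda)^{-1}$ to a kernel vector yields, since $\mu_0\neq\lambda$, a genuine eigenvector of $B$ with eigenvalue $\mu_0$. Finally, each pole has finite order and finite-rank principal part, so the Riesz projection obtained by integrating $(\mu-B)^{-1}$ over a small circle around $\mu_0$ is a finite-rank projection whose range is the generalized eigenspace at $\mu_0$; hence that space is finite-dimensional.

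The main obstacle, and the reason the statement is not an immediate instance of the classical analytic Fredholm theorem, is that $R_{21}$ is only assumed bounded, so $(B-\lambda)^{-1}$ itself is not compact and cannot be fed directly into that theorem. The device that resolves this is the factorization above, which confines the non-compact corner to the square-zero operator $N$ and absorbs it into the invertible prefactor $I-zN$, leaving an identity-minus-compact family to which the standard theory applies.
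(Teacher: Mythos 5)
Your argument is correct, but note that the paper itself gives no proof of Theorem \ref{UTC-resolvent-theorem}: it is quoted as a special case of Theorem 3.5 of \cite{Sylvester}, which Sylvester establishes within his framework of upper triangular compact operators (the same machinery quoted here as Theorem \ref{sylvester34}). Your route is a genuinely different, self-contained alternative: writing $(B-\lambda)^{-1}=K+N$ with $K$ compact and $N$ strictly lower triangular, so that $N^2=0$, and factoring $I-z(B-\lambda)^{-1}=(I-zN)\bigl(I-(I+zN)zK\bigr)$, you reduce everything to the classical analytic Fredholm theorem; this isolates exactly the nilpotency mechanism that makes the upper-triangular-compact calculus work, but invokes only the textbook theorem. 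Two steps you assert deserve explicit references, though both are standard: (i) you use the form of the analytic Fredholm theorem in which \emph{all} negative Laurent coefficients of $\bigl(I-(I+zN)zK\bigr)^{-1}$ at an exceptional point are finite rank, not merely the residue --- this is genuinely needed, since after multiplication by the analytic factors $I+zN$ and $(B-\lambda)^{-1}$ the residue of $(B-\mu)^{-1}$ mixes higher-order singular coefficients with Taylor coefficients of those factors; the stronger statement is classical and follows from the usual finite-rank reduction in the proof of the theorem; (ii) the identification of the range of a finite-rank Riesz projection at an isolated spectral point of the closed operator $B$ with its generalized eigenspace is the standard fact from \cite{Kato}. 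With these citations supplied your proof is complete; what it buys over the paper's approach is self-containedness, while Sylvester's formulation buys a reusable Fredholm alternative for analytic families of upper triangular compact operators, which this paper still needs separately (via Theorem \ref{sylvester34}) in the proof of Theorem \ref{helmholtz-main-theorem}.
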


\begin{proof}[Proof of Theorem \ref{schrodinger-main-theorem}.]
Let $B$ and $\widetilde B$ denote the operator matrices
\[\begin{bmatrix}P&W+V\\0&P\end{bmatrix}\colon
H^k_0(\Omega)\times H^k_P(\Omega)\longrightarrow L^2(\Omega)\times L^2(\Omega)\]
and
\[\begin{bmatrix}P+W+V&W+V\\0&P\end{bmatrix}\colon
H^k_0(\Omega)\times H^k_P(\Omega)\longrightarrow L^2(\Omega)\times L^2(\Omega),\]
respectively.
The operator $B$ has already been shown in Proposition \ref{resolvent-of-the-born-approximation} to have well behaved resolvent operators. The operators $B$ and $\widetilde B$ are connected by the elementary identity
\begin{align*}
\widetilde B+\lambda&=\left(1+\begin{bmatrix}W+V&0\\0&0\end{bmatrix}(B+\lambda)^{-1}\right)(B+\lambda)\\
&=\left(1+\begin{bmatrix}(W+V)R_{11}&(W+V)R_{12}\\0&0\end{bmatrix}\right)(B+\lambda),
\end{align*}
where $\lambda$ is a sufficiently large positive real number, and $R_{11}$ and $R_{12}$ are as in Proposition \ref{resolvent-of-the-born-approximation}. Since by Proposition \ref{resolvent-of-the-born-approximation}, the operator norm of the matrix involving $W$, $V$, $R_{11}$ and $R_{12}$ is $\ll\lambda^{e/k-1}$, we may invert the first factor on the right-hand side by Neumann series, with the inverse being an operator matrix $\begin{bmatrix}A&C\\0&1\end{bmatrix}$ for some bounded operators $A$ and $C$. It is easy to check that $C=-A\left(W+V\right)R_{12}$, so that $C$ is compact. Now $(\widetilde B+\lambda)^{-1}$ is upper triangular compact and Theorem~\ref{UTC-resolvent-theorem} implies the desired spectral properties for the perturbed operator $\widetilde B$.
\end{proof}

For the proof of the Helmholtz result, the perturbation $\lambda V$ is too large and we cannot invert via Neumann series. Instead we will invert via a Fredholm argument by using the following special case of Proposition 3.4 in \cite{Sylvester}.
\begin{theorem}\label{sylvester34}
Let $H$ be a Hilbert space, let $D\subseteq\mathbb C$ be a connected open set, and let
\[A(\lambda)=\begin{bmatrix}A_{11}(\lambda)&A_{12}(\lambda)\\A_{21}(\lambda)&A_{22}(\lambda)\end{bmatrix}\colon H\times H\longrightarrow H\times H\]
be a bounded operator for each $\lambda\in D$ with $A_{11}(\lambda)$, $A_{12}(\lambda)$ and $A_{22}(\lambda)$ compact. Assume that $A(\lambda)$ depends analytically on $\lambda$. Let $\lambda_0\in D$ be such that $\mathrm{Ker}\left(1+A(\lambda_0)\right)=\left\{0\right\}$ or $\mathrm{CoKer}\left(1+A(\lambda_0)\right)=\left\{0\right\}$. Then $1+A(\lambda_0)$ is bijective with a bounded inverse, and so is $1+A(\lambda)$ for all but a discrete set of $\lambda\in D$. At the exceptional points the kernel and cokernel of $1+A(\lambda)$ are finite dimensional.
\end{theorem}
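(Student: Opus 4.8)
The plan is to remove the one non-compact block by a Gaussian-elimination similarity and thereby reduce the assertion to the classical analytic Fredholm theorem for a compact operator-valued analytic family on the single Hilbert space $\mathcal H=H\times H$. The key point is that although $A_{21}(\lambda)$ is only bounded, it can be eliminated by left multiplication with an invertible analytic factor without spoiling compactness of the resulting correction of the identity.

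First I would introduce, for $\lambda\in D$, the operator
\[M(\lambda)=\begin{bmatrix}1&0\\-A_{21}(\lambda)&1\end{bmatrix}\colon\mathcal H\longrightarrow\mathcal H,\]
which is bounded and bijective with bounded inverse $\begin{bmatrix}1&0\\A_{21}(\lambda)&1\end{bmatrix}$ and depends analytically on $\lambda$, since $A_{21}$ does. A direct block computation gives
\[M(\lambda)\bigl(1+A(\lambda)\bigr)=1+\widetilde A(\lambda),\qquad
\widetilde A(\lambda)=\begin{bmatrix}A_{11}&A_{12}\\-A_{21}A_{11}&A_{22}-A_{21}A_{12}\end{bmatrix}(\lambda).\]
Every block of $\widetilde A(\lambda)$ is compact: $A_{11}$, $A_{12}$, $A_{22}$ are compact by hypothesis, while $A_{21}A_{11}$ and $A_{21}A_{12}$ are products of a bounded operator and a compact one. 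Moreover $\lambda\mapsto\widetilde A(\lambda)$ is an analytic family of compact operators on $\mathcal H$, because sums and products of analytic operator-valued functions are analytic. Since $M(\lambda_0)$ is a bijection, $\mathrm{Ker}(1+\widetilde A(\lambda_0))=\mathrm{Ker}(1+A(\lambda_0))$ and $\mathrm{CoKer}(1+\widetilde A(\lambda_0))\cong\mathrm{CoKer}(1+A(\lambda_0))$, and by the Fredholm alternative for $1+(\text{compact})$ either of the hypotheses $\mathrm{Ker}(1+A(\lambda_0))=\{0\}$ or $\mathrm{CoKer}(1+A(\lambda_0))=\{0\}$ forces $1+\widetilde A(\lambda_0)$ to be bijective.

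I would then apply the classical analytic Fredholm (Steinberg) theorem to the family $\lambda\mapsto\widetilde A(\lambda)$ on the connected open set $D$: as $1+\widetilde A(\lambda_0)$ is invertible, we are in the non-degenerate alternative, so $1+\widetilde A(\lambda)$ is bijective for every $\lambda$ outside a discrete set $S\subseteq D$, and at every $\lambda\in D$ the operator $1+\widetilde A(\lambda)$ is Fredholm of index zero with finite-dimensional kernel and cokernel. Transferring back through the everywhere-invertible factor $M$, one has $1+A(\lambda)=M(\lambda)^{-1}\bigl(1+\widetilde A(\lambda)\bigr)$, so $1+A(\lambda)$ is bijective exactly when $1+\widetilde A(\lambda)$ is, with bounded inverse by the open mapping theorem; in particular $1+A(\lambda_0)$ is bijective with bounded inverse, and $1+A(\lambda)$ is bijective for all $\lambda\in D\setminus S$, while at $\lambda\in S$ the kernel and cokernel of $1+A(\lambda)$ coincide with (respectively are isomorphic to) those of $1+\widetilde A(\lambda)$ and are finite dimensional. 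I do not expect a genuine obstacle: the whole argument hinges on the single algebraic observation that the elimination factor $M$ converts the upper-triangular-compact structure into an honest compact perturbation of the identity while preserving analyticity and (at $\lambda_0$) bijectivity, after which the classical theory applies verbatim; the only point needing a line of care is handling the cokernel version of the hypothesis, which is immediate from the Fredholm alternative.
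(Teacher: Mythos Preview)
The paper does not prove this theorem at all; it is stated without proof as a special case of Proposition~3.4 in \cite{Sylvester}, so there is no in-paper argument to compare against. Your proposal is correct: the left-multiplication by the unit lower-triangular factor $M(\lambda)$ eliminates the one merely bounded block, the resulting $\widetilde A(\lambda)$ is genuinely compact and analytic, and the classical analytic Fredholm alternative together with the invertibility of $M(\lambda)$ yields exactly the stated conclusions (the block computation $M(1+A)=1+\widetilde A$ checks out, and the kernel/cokernel transfer through the isomorphism $M(\lambda_0)$ is handled properly). This Schur--elimination reduction is in fact the standard device behind the ``upper triangular compact'' formalism of \cite{Sylvester}, so your argument is essentially the intended one.
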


\begin{proof}[Proof of Theorem \ref{helmholtz-main-theorem}.]
The proof in the Helmholtz case follows the same lines, except that now we will consider the system for $u$ and $\widetilde w=w/\lambda$, leading to the operator
\[\widetilde B(\lambda)=\begin{bmatrix}
P+W+\lambda V&W/\lambda+V\\
0&P
\end{bmatrix}\colon H^k_0(\Omega)\times H^k_P(\Omega)\longrightarrow L^2(\Omega)\times L^2(\Omega),\]
and the corresponding Born approximation
\[B(\lambda)=\begin{bmatrix}
P+W&W/\lambda+V\\
0&P
\end{bmatrix}\colon H^k_0(\Omega)\times H^k_P(\Omega)\longrightarrow L^2(\Omega)\times L^2(\Omega).\]
It is clear that this $B(\lambda)$ has the same excellent properties as in the Schr\"odinger case, and that the same estimates apply.
The main difference from the situation of Theorem \ref{schrodinger-main-theorem} is that in this case the perturbation that needs to be added to $B(\lambda)$ involves $\lambda V$ instead of $V$, and unfortunately this is a rather large perturbation, and we no longer can invert via Neumann series. Instead, we first observe that, given $\lambda_0\in\mathbb R_+$ that is so large that $B(\lambda_0)+\lambda_0$ is invertible with a bounded upper triangular compact inverse, we have
\[B(\lambda)+\lambda=\left(1+\begin{bmatrix}\lambda-\lambda_0&W/\lambda-W/\lambda_0\\0&\lambda-\lambda_0\end{bmatrix}\left(B(\lambda_0)+\lambda_0\right)^{-1}\right)\left(B(\lambda_0)+\lambda_0\right),\]
for $\lambda\in\mathbb C^\times$. When $\lambda=\lambda_0$, the left-hand side is surjective and the first term on the right-hand side is therefore invertible by Theorem \ref{sylvester34}, and $\left(B(\lambda)+\lambda\right)^{-1}$ exists for all $\lambda\in\mathbb C^\times$ except possibly for a discrete set of exceptions. In the discrete set of exceptions, the kernel of $B(\lambda)+\lambda$ has a finite dimensional kernel and a finite dimensional cokernel. Furthermore, $\left(B(\lambda)+\lambda\right)^{-1}$ is upper triangular compact when it exists; this follows from the upper triangular compactness of $\left(B(\lambda_0)+\lambda_0\right)^{-1}$ and the fact that the inverse of the first factor $(1+\ldots)$ must be of the form $\begin{bmatrix}\ast&D\\\ast&\ast\end{bmatrix}$, where $D$ is compact and the remaining elements are bounded, as is easy to check.

Next, for those $\lambda\in\mathbb C^\times$, for which $(B(\lambda)+\lambda)$ is invertible as above, we have
\[\widetilde B(\lambda)+\lambda=\left(B(\lambda)+\lambda\right)\left(1+\left(B(\lambda)+\lambda\right)^{-1}\begin{bmatrix}\lambda V&0\\0&0\end{bmatrix}\right).\]
If we prove that $\widetilde B(\lambda)+\lambda$ is injective for large $\lambda\in\mathbb R_+$, then so will be the last factor on the right-hand side. Then Theorem \ref{sylvester34} implies that the factor on the right is invertible with the inverse being a bounded operator matrix $\begin{bmatrix}A&0\\C&1\end{bmatrix}$, and so $\widetilde B(\lambda)+\lambda$ must be invertible with an upper triangular compact inverse no less, except possibly for a discrete set of exceptions. At these possible exceptional points, the kernel of $\widetilde B(\lambda)+\lambda$ must be finite dimensional because the kernel of the factor $1+\ldots$ is. In the case $W=0$ we get the discreteness in $\mathbb C$ simply because the problematic terms $W/\lambda$ never appear.

Thus it is enough to prove that the kernel of $\widetilde B(\lambda)+\lambda$ is merely the trivial $\left\{0\right\}$ for large enough $\lambda\in\mathbb R_+$.
So, we have to prove that if $u\in H^k_0(\Omega)$ and $w\in H^k_P(\Omega)$ solve
\begin{equation*}\label{helmholtz-kernel-system}\begin{cases}
\left(P+\lambda\right)u+Ww/\lambda+Vw=-Wu-\lambda Vu,\\
\left(P+\lambda\right)w=0,
\end{cases}\end{equation*}
then we must have $u\equiv w\equiv0$. We will assume that $u\not\equiv0$ and derive a contradiction. To simplify notation, we may normalize $\left\|u\right\|=1$.
We immediately get from these equations the estimates
\[\left\|\chi w\right\|\ll\lambda^{-1/k}\left\|Wu\right\|+\lambda^{-1/k}\cdot\lambda\left\|u\right\|\ll\lambda^{-1/k+e/k}+\lambda^{-1/k+1}\ll\lambda^{1-1/k},\]
and
\[\left\|Wu\right\|\ll\lambda^{e/k},\quad\left\|w\right\|\ll\lambda\quad\text{and}\quad\left\|Ww\right\|\ll\lambda.
\]

From the above pair of equations, integrating by parts, it is easy to derive the identities
\[\int\overline u\,Pu+\lambda\int\left|u\right|^2+\frac1\lambda\int u\,\overline{Ww}+\int u\,\overline{Vw}=-\int u\,\overline{Wu}-\lambda\int\overline V\left|u\right|^2,\]
and
\[\int\overline w\,(P+\lambda)u+\frac1\lambda\int\overline w\,Ww+\int V\left|w\right|^2=-\int\overline w\,Wu-\lambda\int\overline w\,Vu,\]
as well as the simple identity
\[\int\overline w\,(P+\lambda)u=0.\]
Furthermore, it is easy to see that
\[\int u\,\overline{Vw}-\int u\,V\,\overline w
\ll\left\|\Im V\right\|_{L^\infty}\cdot\left\|w\right\|
\ll\lambda\left\|\Im V\right\|_{L^\infty}.\]
Also, using the easy fact that $W$ is $P$-form infinitesimal (see Lemma \ref{form-infinitesimality} below), we get
\[\Re\int u\,\overline{Wu}>-\kappa\int\overline u\,Pu-C_\kappa\int\left|u\right|^2,\]
where $\kappa$ is an arbitrarily small positive real number and $C_\kappa$ is some positive real number depending on $\kappa$ (and $n$, $\Omega$, $P$ and $W$).

Finally, let us introduce one more object: let $\chi\in C_{\mathrm c}^\infty(\Omega)$ be such that it takes values only from $\left[0,1\right]$, such that it is $\equiv1$ in some compact set $K\subseteq\Omega$ such that $V$ satisfies the coercivity relation in $\Omega\setminus K$ and $K$ contains the supports of~$W_\alpha$.

Now, the above observations lead to the complicated identity
\begin{multline*}
\int\overline u\,Pu+\lambda\int\left|u\right|^2+\lambda\int\overline V\left|u\right|^2+\int u\,\overline{Wu}+O\bigl(\lambda\left\|\Im V\right\|_{L^\infty}\bigr)+O(1)\\
=\frac1\lambda\int V\left(1-\chi^2\right)\left|w\right|^2
+\frac1\lambda\int V\left|\chi w\right|^2+\frac1\lambda\int\overline w\,Wu
+\frac1{\lambda^2}\int\overline w\,Ww.
\end{multline*}
The real part of the left-hand side is
\[\geqslant(1-\kappa)\int\overline u\,Pu+\Re\int\left(\lambda(1+\overline V)-C_\kappa\right)\left|u\right|^2+O\bigl(\lambda\left\|\Im V\right\|_{L^\infty}\bigr)+O(1),\]
and this is certainly positive and $\gg\lambda$ as $\lambda\longrightarrow\infty$, provided that $\left\|\Im V\right\|_{L^\infty}$ was indeed sufficiently small. The positivity follows from the fact that $u$ cannot vanish identically near $\Omega\setminus K$, for otherwise $w$ would also vanish in $\Omega\setminus K$, and a fortiori vanish in all of $\Omega$. Thus, $u$ would solve $\left(P+\lambda\right)u+Wu+\lambda Vu=0$ in $\mathbb R^n$ and $u\equiv0$ in $\mathbb R^n\setminus K$, implying that $u\equiv0$ in all of $\mathbb R^n$ by the weak unique continuation theorems in \cite{Wang}. (We apply Theorem 1.3 or 1.4 of \cite{Wang} depending on whether $e>0$ or $e=0$.)

The right-hand side, on the other hand, is
\[\frac1\lambda\int V\left(1-\chi^2\right)\left|w\right|^2
+O(\lambda^{1-2/k})
,\]
and due to the coercivity condition, the first term always has a non-positive real part. Thus the right-hand side always has a negative or $o(\lambda)$ real part, and so we have reached the desired contradiction.
\end{proof}

\begin{lemma}\label{form-infinitesimality}
Let $W$ and $P$ be as in Theorem \ref{schrodinger-main-theorem}. Then, for any $\kappa\in\mathbb R_+$ there exists a constant $C_\kappa\in\mathbb R_+$ such that
\[\left|\left\langle u\middle|Wu\right\rangle\right|\leqslant\kappa\left\langle u\middle|Pu\right\rangle+C_\kappa\left\|u\right\|^2\]
for all $u\in H^k(\mathbb R^n)$, where $\left\langle u\middle|v\right\rangle$ denotes $\int_{\mathbb R^n}\overline uv$ for $u,v\in L^2(\mathbb R^n)$.
\end{lemma}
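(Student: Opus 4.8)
The plan is to reduce the assertion to a G\r{a}rding-type lower bound for $P$ together with an elementary integration-by-parts bound for $W$, and then to close the argument with Sobolev interpolation and Young's inequality.

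If $e=0$, then $W=0$ and there is nothing to prove, so we may assume $1\leqslant e\leqslant k-1$, and in particular $k\geqslant 2$. The first ingredient is that there exist constants $c_0,C_0\in\mathbb R_+$ with $\langle u\mid Pu\rangle+C_0\|u\|^2\geqslant c_0\,\|u\|_{H^{k/2}(\mathbb R^n)}^2$ for all $u\in H^k(\mathbb R^n)$. Indeed, formal self-adjointness makes the symbol $P(\cdot)$ real-valued, so by Plancherel $\langle u\mid Pu\rangle=\int_{\mathbb R^n}P(\xi)\,|\widehat u(\xi)|^2\,\mathrm d\xi$; ellipticity gives $|P(\xi)|\gg|\xi|^k$ for large $|\xi|$, while boundedness from below forces the principal symbol to be strictly positive on $S^{n-1}$ (otherwise $P(t\xi)\longrightarrow-\infty$ along a suitable ray), so that $P(\xi)+C_0\gg(1+|\xi|)^k$ for a suitable $C_0\in\mathbb R_+$, and the claimed bound follows from Plancherel. (In particular $k$ is necessarily even, but this will not be used.) Because of this, it suffices to show that for every $\kappa'\in\mathbb R_+$ there is some $C_{\kappa'}\in\mathbb R_+$ with $|\langle u\mid Wu\rangle|\leqslant\kappa'\|u\|_{H^{k/2}(\mathbb R^n)}^2+C_{\kappa'}\|u\|^2$ for all $u\in H^k(\mathbb R^n)$.

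For this last estimate I would write $\langle u\mid Wu\rangle=\sum_{1\leqslant|\alpha|\leqslant e}\int_{\mathbb R^n}\overline u\,W_\alpha\,\partial^\alpha u$ and, for each $\alpha$, split the multi-index as $\alpha=\beta+\gamma$ with $|\beta|,|\gamma|\leqslant\lceil|\alpha|/2\rceil$ (which is always possible). Integrating by parts in $\partial^\beta$, which produces no boundary terms since $W_\alpha$ is compactly supported in $\Omega$, and expanding by the Leibniz rule distributes these $|\beta|$ derivatives onto $\overline u$ and onto $W_\alpha$; since $W_\alpha\in W^{|\alpha|,\infty}_{\mathrm c}(\Omega)$, every coefficient $\partial^\delta W_\alpha$ that appears (with $\delta\leqslant\beta\leqslant\alpha$ componentwise) lies in $L^\infty$, and the Cauchy--Schwarz inequality yields $|\langle u\mid Wu\rangle|\ll\sum_{1\leqslant|\alpha|\leqslant e}\|u\|_{H^{|\beta|}(\mathbb R^n)}\,\|u\|_{H^{|\gamma|}(\mathbb R^n)}$. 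Since $|\beta|,|\gamma|\leqslant\lceil|\alpha|/2\rceil\leqslant\lceil(k-1)/2\rceil\leqslant k/2$ and $|\beta|+|\gamma|=|\alpha|\leqslant k-1$, the interpolation inequality $\|u\|_{H^j(\mathbb R^n)}\leqslant\|u\|_{H^{k/2}(\mathbb R^n)}^{2j/k}\,\|u\|^{1-2j/k}$, valid for $0\leqslant j\leqslant k/2$, gives $\|u\|_{H^{|\beta|}}\,\|u\|_{H^{|\gamma|}}\leqslant\|u\|_{H^{k/2}}^{2|\alpha|/k}\,\|u\|^{2-2|\alpha|/k}$, where the exponent $2|\alpha|/k$ lies strictly between $0$ and $2$; Young's inequality with an $\varepsilon$-parameter then converts this into $\kappa'\|u\|_{H^{k/2}}^2+C_{\kappa'}\|u\|^2$. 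Combining the two ingredients proves the lemma, with $\kappa=\kappa'/c_0$ and a correspondingly adjusted additive constant.

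The one place that requires any care --- and which dictates the precise hypotheses imposed on the $W_\alpha$ --- is that when $e>k/2$ the form $\langle u\mid Wu\rangle$ a priori involves more derivatives of $u$ than the form $\langle u\mid Pu\rangle\asymp\|u\|_{H^{k/2}}^2$ can absorb directly. The symmetric redistribution of derivatives above cures this, but only at the cost of differentiating $W_\alpha$ up to $|\alpha|$ times and of discarding boundary terms, which is exactly why $W_\alpha\in W^{|\alpha|,\infty}_{\mathrm c}(\Omega)$ is assumed; beyond this the argument is routine, using only Plancherel, Sobolev interpolation, and Young's inequality.
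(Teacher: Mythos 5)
Your proof is correct, but it takes a genuinely different route from the paper's. The paper decomposes $W$ into the symmetric parts $W+W^\ast$ and $i(W-W^\ast)$, checks on the Fourier side that each is $P$-bounded with relative bound zero in the operator sense, $\|(W\pm W^\ast)u\|^2\leqslant\kappa\|Pu\|^2+C_\kappa'\|u\|^2$, and then invokes Theorem VI.1.38 of \cite{Kato} to convert operator relative boundedness into form relative boundedness. You instead work directly with the quadratic form: a G\aa{}rding-type bound $\langle u\mid Pu\rangle+C_0\|u\|^2\gg\|u\|_{H^{k/2}(\mathbb R^n)}^2$ (your derivation, using that the symbol is real, elliptic and bounded below, is fine), then a symmetric redistribution of the derivatives in $\int\overline u\,W_\alpha\,\partial^\alpha u$ so that at most $\lceil|\alpha|/2\rceil$ fall on each factor, followed by Leibniz, Cauchy--Schwarz, Sobolev interpolation and Young's inequality. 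Both arguments are sound. Yours is self-contained, avoids the abstract perturbation-theoretic step, and in fact needs slightly less regularity of the coefficients (roughly $|\alpha|/2$ bounded derivatives of $W_\alpha$ rather than the full $|\alpha|$ needed in the paper to make $W^\ast$ an operator with $L^\infty$ coefficients); the paper's version is shorter because the interpolation and absorption work is delegated to Kato's theorem. Two small points of care in your write-up, both routine: the integration by parts should be justified by density of Schwartz functions in $H^k(\mathbb R^n)$ (all terms involve at most $k-1$ derivatives of $u$, so every expression is continuous in the $H^k$ topology), and the informal statement $\langle u\mid Pu\rangle\asymp\|u\|_{H^{k/2}}^2$ in your closing remark should be understood with the additive term $C_0\|u\|^2$, as in your actual argument.
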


\begin{proof}
The conditions on the coefficients of $W$ guarantee that $W+W^\ast$ and $iW-iW^\ast$ are well-defined and symmetric on $H^k(\mathbb R^n)$, and
it is easy to check by taking Fourier transforms that for any $\kappa\in\mathbb R_+$, there exists a constant $C_\kappa'\in\mathbb R_+$ such that
\[\left\|(W\pm W^\ast)u\right\|^2\leqslant\kappa\left\|Pu\right\|^2+C_\kappa'\left\|u\right\|^2\]
for all $u\in H^k(\mathbb R^n)$. Finally, since
\[\left|\left\langle u\middle|Wu\right\rangle\right|\leqslant
\frac12\left|\left\langle u\middle|(W+W^\ast)u\right\rangle\right|
+\frac12\left|\left\langle u\middle|(iW-iW^\ast)u\right\rangle\right|,\]
the result now follows from Theorem VI.1.38 of \cite{Kato}.
\end{proof}

\small

\end{document}